\newtheorem{theorem}{Theorem}[section]
\newtheorem{proposition}[theorem]{Proposition}
\newtheorem{corollary}[theorem]{Corollary}
\newtheorem{lemma}[theorem]{Lemma}
\newtheorem{question}[theorem]{Question}
\theoremstyle{definition}
\newtheorem{example}[theorem]{Example}
\theoremstyle{remark}
\newtheorem*{remark}{Remark}
\numberwithin{equation}{section}
\def\C{\mathbb C}
\def\Q{\mathbb Q}
\def\R{\mathbb R}
\def\Z{\mathbb Z}
\def\k{\mathbf k}
\def\phi{\varphi}
\newcommand{\mb}[1]{{\textbf {\textit#1}}}
\newcommand{\bin}[2]{{\textstyle\binom{#1}{#2}}}
\renewcommand{\ge}{\geqslant}
\renewcommand{\le}{\leqslant}
\def\bideg{\mathop{\mathrm{bideg}}}
\def\Ext{\mathop{\mathrm{Ext}}\nolimits}
\def\Tor{\mathop{\mathrm{Tor}}\nolimits}
\newcommand{\cs}{\mathbin{\#}}
\newcommand{\djs}{\mbox{$D{\mskip-1mu}J\/$}}
\newcommand{\zk}{\mathcal Z_{K}}
\newcommand{\barzk}{\mathcal Z_{\overline{K}}}
\newcommand{\z}{\mathcal{Z}}
\newcommand{\namedright}[3]{\ensuremath{#1\stackrel{#2}
 {\longrightarrow}#3}}
\newcommand{\nameddright}[5]{\ensuremath{#1\stackrel{#2}
 {\longrightarrow}#3\stackrel{#4}{\longrightarrow}#5}}
\newcommand{\namedddright}[7]{\ensuremath{#1\stackrel{#2}
 {\longrightarrow}#3\stackrel{#4}{\longrightarrow}#5
  \stackrel{#6}{\longrightarrow}#7}}
\newcommand{\larrow}{\relbar\!\!\relbar\!\!\rightarrow}
\newcommand{\llarrow}{\relbar\!\!\relbar\!\!\larrow}
\newcommand{\lnamedright}[3]{\ensuremath{#1\stackrel{#2}
 {\larrow}#3}}
\newcommand{\llnamedright}[3]{\ensuremath{#1\stackrel{#2}
 {\llarrow}#3}}
\newcommand{\llnameddright}[5]{\ensuremath{#1\stackrel{#2}
 {\llarrow}#3\stackrel{#4}{\llarrow}#5}}
\begin{document}

\title[Homotopy types of moment-angle complexes]%
{The homotopy types of moment-angle complexes for flag complexes}

\author{Jelena Grbi\' c$^{*}$}
\address{School of Mathematics, University of Southampton, Southampton SO17 1BJ,
UK} \email{J.Grbic@soton.ac.uk}

\author{Taras Panov$^{**}$}
\address{Department of Mathematics and Mechanics, Moscow
State University, Leninskie Gory, 119991 Moscow, Russia,
\newline\indent Institute for Theoretical and Experimental Physics,
Moscow, Russia \quad \emph{and}
\newline\indent Institute for Information Transmission Problems,
Russian Academy of Sciences}
%Taras
%\newline\indent Delone Laboratory of Discrete and
%Computational Geometry, Yaroslavl State University, Yaroslavl, Russia}
\email{tpanov@mech.math.msu.su}

\author{Stephen Theriault}
\address{School of Mathematics, University of Southampton, Southampton
SO17 1BJ, UK} \email{S.D.Theriault@soton.ac.uk}

\author{Jie Wu$^{***}$}
\address{Department of Mathematics, National University of
Singapore, Block S17 (SOC1), 06-02 10, Lower Kent Ridge Road,
119076 Singapore} \email{matwuj@nus.edu.sg}

%Taras 
\thanks{$^{*}$ Research supported by the Leverhulme Trust (Research 
Project Grant RPG-2012-560).}  
\thanks{$^{**}$ Research supported by the Russian Science Foundation (grant
no.~14-11-00414).}
\thanks{$^{***}$ Research supported in part by the Singapore Ministry
of Education research grant (AcRF Tier 1 WBS No.
R-146-000-190-112) and a grant (No.~11329101) of NSFC of China.}

%\keywords{moment-angle manifold, simplicial fan, simple polytope,
%complex structure, Dolbeault cohomology, Hodge numbers}

%\subjclass{32J18, 32L05, 32Q55, 57R19, 14M25}

%\date{\today}

\begin{abstract}
We study the homotopy types of moment-angle complexes, or
equivalently, of complements of coordinate subspace arrangements.
The overall aim is to identify the simplicial complexes $K$ for
which the corresponding moment-angle complex $\zk$ has the
homotopy type of a wedge of spheres or a connected sum of sphere
products. When $K$ is flag, we identify in algebraic and
combinatorial terms those $K$ for which $\zk$ is homotopy
equivalent to a wedge of spheres, and give a combinatorial formula
for the number of spheres in the wedge. This extends results of
Berglund and J\"ollenbeck on Golod rings and homotopy theoretical
results of the first and third authors. We also establish a
connection between minimally non-Golod rings and moment-angle
complexes $\zk$ which are homotopy equivalent to a connected sum
of sphere products. We go on to show that for any flag complex $K$
the loop spaces $\Omega \zk$ and $\Omega \djs(K)$ are homotopy
equivalent to a product of spheres and loops on spheres when
localised rationally or at any prime $p\neq 2$.
\end{abstract}

\maketitle

\section{Introduction}
Moment-angle complexes are key players in the emerging field of
\emph{toric topology}, which lies on the borders between topology,
algebraic and symplectic geometry, and
combinatorics~\cite{bu-pa12}. The moment-angle complex $\zk$, as a
space with a torus action, appeared in work of Davis and
Januszkiewicz~\cite{da-ja91} on topological generalisations of
toric varieties. The homotopy orbit space of $\zk$ is the
Davis--Januszkiewicz space $\djs(K)$, which is a cellular model
for the Stanley--Reisner ring $\mathbb Z[K]$, while the genuine
orbit space of $\zk$ is the cone over the simplicial complex $K$.
Buchstaber and the second author~\cite{bu-pa02} introduced
homotopy theoretical models of both the moment-angle complex $\zk$
and the Davis--Januszkiewicz space $\djs(K)$ as a homotopy colimit
construction of the product functor on the topological pairs
$(D^2, S^1)$ and $(\mathbb CP^\infty, \ast)$ respectively, with
the colimit taken over the face category of the simplicial complex
$K$. Recently, homotopy theoretical generalisations of
moment-angle complexes and related spaces under the unifying
umbrella of polyhedral products~(see, for
example,\cite{b-b-c-g10},\cite{gr-th04}, \cite{gr-th07},
\cite{gr-th12}) have brought stable and unstable decomposition
techniques to bear, and are leading to an improved understanding
of toric spaces.

The homotopy theory of moment-angle complexes and polyhedral
products in general has far reaching applications in
combinatorial and homological algebra, in particular, in the study of \emph{face rings} (or
\emph{Stanley--Reisner rings}) of simplicial complexes and more
general monomial ideals.

In this paper we consider the following
related homotopy theoretical and algebraic problems:
\begin{itemize}
\item[--] identifying the homotopy type of
the moment-angle complex~$\zk$ for certain simplicial complexes
$K$;
\item[--] describing the multiplication and higher Massey
products in the $\Tor$-algebra
$H^*(\zk)=\Tor_{\k[v_1,\ldots,v_m]}(\k[K],\k)$ of the face
ring~$\k[K]$;
\item[--] describing the Yoneda algebra $\Ext_{\k[K]}(\k,\k)$ in terms of
generators and relations;
\item[--] describing the structure of the Pontryagin algebra
$H_*( \Omega\djs(K))$ and its commutator subalgebra
$H_*( \Omega\zk)$ via iterated and higher Whitehead (Samelson)
products;
\item[--] identifying the homotopy type of the loop spaces
$ \Omega\djs(K)$ and $ \Omega\zk$.
\end{itemize}

The main objects and constructions are introduced in Section~\ref{preli},
together with some known preliminary results. In Section~\ref{sec:golod} we
give topological interpretations of the Golod property of the
face ring~$\k[K]$. This ring is \emph{Golod} if the multiplication in the
$\Tor$-algebra $H^*(\zk)=\Tor_{\k[v_1,\ldots,v_m]}(\k[K],\k)$ is
trivial, together with all higher Massey products (cf.~\cite{golo62},
\cite{gu-le69}). The topological interpretations are in terms of
$H_{\ast}(\Omega\zk)$ being a free graded associative algebra,
$H^{\ast}(\zk)$ having a trivial multiplication, and a certain
identity holding for the Poincar\'{e} series of $H_{\ast}(\Omega\zk)$.

In Section~\ref{flagK} we concentrate on the case when $K$ is a
flag complex. Our techniques allow for a complete solution of the
problems above in the case of flag complexes.  A flag complex $K$ is
determined by its 1-skeleton $K^1$. The Yoneda algebra
$\Ext_{\k[K]}(\k,\k)\cong H_*( \Omega\djs(K))$ has a simple
presentation as a graph product algebra. In Theorem~\ref{multgen}
we explicitly describe the minimal generating set of its commutator subalgebra
$H_*( \Omega\zk)$ and the basis of the corresponding iterated
commutators.

From the homotopy-theoretic point of view, particularly important
moment-angle complexes $\zk$ are those which have the homotopy type of
a wedge of spheres. In this case the associative graded algebra
$H_*( \Omega\zk)$ is free, and the multiplication in the
$\Tor$-algebra $H^*(\zk)=\Tor_{\k[v_1,\ldots,v_m]}(\k[K],\k)$ is
trivial, together with all higher Massey products, so the face
ring~$\k[K]$ is Golod. In Theorem~\ref{flws}
we show that for flag complexes~$K$ the Golodness of $K$ is
the precise algebraic criterion for $\zk$ being homotopy
equivalent to a wedge of spheres. Using a result of Berglund and
J\"ollenbeck~\cite{be-jo07}, this can be reformulated entirely in
terms of the cup product:  for a flag complex $K$, the moment-angle
complex $\zk$ is homotopy equivalent to a wedge of spheres if and only
if the cup product in $H^*(\zk)$ is trivial. Most importantly,
there is a purely combinatorial description of the class of flag
complexes $K$ for which $\zk$ is homotopy equivalent to a wedge of spheres: the
1-skeleton of such $K$ must be a \emph{chordal graph}. This is
an important concept in applied combinatorics and optimisation;
the vertices in a chordal graph admit a
%Taras
\emph{perfect elimination ordering}~\cite{fu-gr65}.
%\emph{total elimination ordering}~\cite{fu-gr65}.

For general $K$, the Golod property of $\k[K]$ does do not
guarantee that $\zk$ is homotopy equivalent to a wedge of spheres.
The reason is that for some Golod complexes $K$, the cohomology
ring $H^*(\zk;\Z)$ may contain non-trivial torsion (see
Example~\ref{rp2ex}). Especially intriguing is that for all known
examples of Golod complexes $K$, the moment-angle complex $\zk$ is
a co-$H$-space (and even a suspension), and this may as well be
true in general (see Question~\ref{GcoHs}).

The next homotopy type of $\zk$ which we consider is a connected
sum of sphere products, where each summand is a product of exactly two spheres. Such a
$\zk$ is obtained by attaching a top
%Taras
cell
to a wedge of spheres along one commutator relation. The
corresponding face ring $\k[K]$ is \emph{minimally non-Golod}, and
the commutator subalgebra $H_*( \Omega\zk)$ in the Yoneda algebra
$\Ext_{\k[K]}(\k,\k)\cong H_*( \Omega\djs(K))$ is a one-relator
algebra. In the case of a flag simplicial complex $K$ the previous
statement classifies minimally non-Golod Stanley-Reisner rings
$\k[K]$, that is, $\k[K]$ is minimally non-Golod if and only if
the moment-angle complex $\zk$ is homotopy equivalent to a
connected sum of sphere products. It is an open question whether
this classification criteria holds for a general simplicial
complex (see Question~\ref{minnG}).

In Section~\ref{sec:loopzk} we address the last problem in the
list above. Our main result there is Theorem~\ref{Whflag}, which
shows that for a flag $K$, both $ \Omega\zk$ and $ \Omega\djs(K)$
are homotopy equivalent to products of spheres and loops of
spheres when localised rationally or at any prime $p\neq 2$.
We also show that the integral
Pontryagin algebra $H_*( \Omega\zk)$ is torsion-free
(Corollary~\ref{zktorsionfree}).

In Section~\ref{pentex} we give a detailed illustration of
many of the ideas and results of the paper in the case when $K$
is the boundary of a pentagon.

\medskip

The authors would like to thank the International Centre for
Mathematical Sciences in Edinburgh, whose support
through a Research-in-Groups grant allowed the authors
to work together on these problems for a month in Aberdeen.

\section{Preliminaries}\label{preli}
Let $K$ be a finite simplicial complex on the set
$[m]=\{1,2,\ldots,m\}$, that is, a collection of subsets
$I=\{i_1,\ldots,i_k\}\subset[m]$ closed under inclusion. We refer
to $I\in K$ as \emph{simplices} or \emph{faces} of~$K$, and
always assume that $\varnothing\in K$.

Assume we are given a set of $m$ topological pairs
\[
  (\mb X,\mb A)=\{(X_1,A_1),\ldots,(X_m,A_m)\}
\]
where $A_i\subset X_i$. For each simplex $I\in K$ we set
\[
  (\mb X,\mb A)^I=\bigl\{(x_1,\ldots,x_m)\in
  \prod_{i=1}^m X_i\ | \  x_i\in A_i\quad\text{for }i\notin
  I\bigl\}.
\]
The \emph{polyhedral product} of $(\mb X,\mb A)$ corresponding to
$K$ is the following subset in~$\prod_{i=1}^m X_i$
\[
  (\mb X,\mb A)^K=\bigcup_{I\in K}(\mb X,\mb A)^I=
  \bigcup_{I\in K}
  \Bigl(\prod_{i\in I}X_i\times\prod_{i\notin I}A_i\Bigl).
\]
In the case when all the pairs $(X_i,A_i)$ are the same, that is,
$X_i=X$ and $A_i=A$ for $i=1,\ldots,m$, we use the notation
$(X,A)^K$ for $(\mb X,\mb A)^K$.

The main example of the polyhedral product is the
\emph{moment-angle complex} $\zk=(D^2,S^1)^K$~\cite{bu-pa00},
which is the key object of study in toric topology. The space
$\zk$ has a natural coordinatewise action of the torus~$T^m$, and
it is a manifold whenever $K$ is a triangulation of a sphere.
Other important cases of polyhedral products include $\djs(K)=(\C
P^\infty,\ast)^K$, which is referred to as the
\emph{Stanley--Reisner space}~\cite{bu-pa00} or the
\emph{Davis--Januszkiewicz space}~\cite{p-r-v04}, and the
complement of the complex \emph{coordinate subspace arrangement}
corresponding to~$K$
\[
  U(K)=(\C,\C^*)^K=\C^m\setminus\bigcup_{\{i_1,\ldots,i_k\}\notin K}
  \{z_{i_1}=\cdots=z_{i_k}=0\}.
\]

According to~\cite[Th.~5.2.5]{bu-pa00}, there is a
$T^m$-equivariant deformation retraction $U(K)\to\zk$. The
spaces $\zk$ and $(\C P^\infty,\ast)^K$ are related by the following
result.

\begin{proposition}[{\cite[Cor.~3.4.5]{bu-pa00}}]
There is a homotopy fibration
\[
  \zk\longrightarrow\djs(K)\longrightarrow(\C
  P^\infty)^m
\]
that is, $\zk$ is the homotopy fibre of the canonical inclusion
$\djs(K)\to(\C P^\infty)^m$.
\end{proposition}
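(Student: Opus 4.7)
The plan is to exhibit $\djs(K)\to(\C P^\infty)^m$ as the Borel construction of the coordinatewise torus action on~$\zk$. The torus $T^m=(S^1)^m$ acts on $\zk\subset(D^2)^m$ coordinatewise, with the $i$-th factor $S^1$ rotating the $i$-th disc; this action preserves the polyhedral product because each pair $(D^2,S^1)$ is $S^1$-stable. Hence there is a standard Borel fibration
\[
  \zk\longrightarrow ET^m\times_{T^m}\zk\longrightarrow BT^m=(\C P^\infty)^m,
\]
and it remains to identify the total space with $\djs(K)$ and the projection with the canonical inclusion.

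The key homotopical input is the Borel construction on the single pair $(D^2,S^1)$: since $D^2$ is contractible, $ES^1\times_{S^1}D^2\simeq BS^1=\C P^\infty$, while the free action of $S^1$ on itself gives $ES^1\times_{S^1}S^1=ES^1\simeq\ast$. Thus the Borel construction converts the pair $(D^2,S^1)$ into the pair $(\C P^\infty,\ast)$. I would then argue that forming the Borel construction commutes with forming the polyhedral product, yielding
\[
  ET^m\times_{T^m}(D^2,S^1)^K\;\simeq\;(\C P^\infty,\ast)^K=\djs(K).
\]
One way to see this is via the description of the polyhedral product as the union $\bigcup_{I\in K}(D^2,S^1)^I$: each piece is $T^m$-invariant and splits as $\prod_{i\in I}D^2\times\prod_{i\notin I}S^1$, on which the complementary subtorus $T^{[m]\setminus I}$ acts freely. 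Its Borel construction is therefore homotopy equivalent to $\prod_{i\in I}\C P^\infty\times\prod_{i\notin I}\ast=(\C P^\infty,\ast)^I$, and these pieces glue consistently along the face inclusions to~$\djs(K)$. Equivalently, $\zk$ is a homotopy colimit over the face category of~$K$ of the diagram of $T^m$-spaces $I\mapsto(D^2,S^1)^I$, and the functor $ET^m\times_{T^m}(-)$ preserves such homotopy colimits.

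Finally, the map $ET^m\times_{T^m}\zk\to BT^m$ is induced, piece by piece, by the inclusion of pairs $(\C P^\infty,\ast)\hookrightarrow(\C P^\infty,\C P^\infty)$. By functoriality of the polyhedral product this assembles to the canonical inclusion $(\C P^\infty,\ast)^K\hookrightarrow(\C P^\infty)^m$, completing the identification. The main obstacle is the interchange of the Borel construction with the polyhedral product; the cleanest route is the explicit piecewise argument above, which avoids any appeal to general homotopy colimit machinery and makes transparent that the resulting map to $BT^m$ is indeed the canonical one.
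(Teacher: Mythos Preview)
The paper does not give its own proof of this proposition; it is simply quoted from~\cite[Cor.~3.4.5]{bu-pa00}. Your argument via the Borel construction is correct and is precisely the approach taken in the cited source: one identifies $ET^m\times_{T^m}\zk$ with the polyhedral product $(ES^1\times_{S^1}D^2,\,ES^1\times_{S^1}S^1)^K$, observes that this pair is homotopy equivalent to $(\C P^\infty,\ast)$, and checks that the projection to $BT^m$ becomes the canonical inclusion. The one point worth tightening is the passage from the piecewise homotopy equivalences to a global one: the cleanest justification is that the Borel functor $ET^m\times_{T^m}(-)$ literally commutes with the union defining the polyhedral product (so $ET^m\times_{T^m}\zk$ \emph{equals} $(ES^1\times_{S^1}D^2,\,ES^1)^K$ on the nose), and then the deformation retraction of the disc bundle $ES^1\times_{S^1}D^2$ onto its zero section $\C P^\infty$, together with a contraction of $ES^1$ to a basepoint on that section, gives a homotopy equivalence of pairs which induces one on the polyhedral products.
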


This fibration splits after looping
\[
   \Omega
  \djs(K)\simeq \Omega\zk\times T^m
\]
but this is not an $H$-space splitting. One can think of $
\Omega\zk$ as the ``commutator subgroup'' of $ \Omega\djs(K)$,
although this can be made precise only after passing to Pontryagin
(loop homology) algebras.

\begin{proposition}[{\cite[(8.2)]{pa-ra08}}]
There is an exact sequence of (noncommutative) algebras
\begin{equation}\label{commutator}
  1\longrightarrow H_*( \Omega\zk;\k)\longrightarrow
  H_*( \Omega\djs(K);\k)\longrightarrow
  \Lambda[u_1,\ldots,u_m]\longrightarrow1
\end{equation}
where $\k$ is field or~$\Z$, and $\Lambda[u_1,\ldots,u_m]$ is the
exterior algebra on $m$ generators of degree one.
\end{proposition}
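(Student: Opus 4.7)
The plan is to derive~\eqref{commutator} by looping the principal homotopy fibration $\zk\to\djs(K)\to(\C P^\infty)^m$ of the preceding proposition. Using $\Omega(\C P^\infty)^m\simeq T^m$, I first rewrite the looped fibration as
\[
  \Omega\zk\longrightarrow\Omega\djs(K)\stackrel{p}{\longrightarrow}T^m,
\]
in which all three spaces are $H$-spaces and both maps are $H$-maps. Applying $H_*(-;\k)$ then produces a sequence of connected Hopf algebras terminating in $H_*(T^m;\k)=\Lambda[u_1,\ldots,u_m]$.

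Next I would exhibit a section of $p$ explicitly. For each vertex $i$ of $K$, the coordinate inclusion $\C P^\infty\hookrightarrow\djs(K)$ loops to a map $S^1\to\Omega\djs(K)$; multiplying these $m$ loop maps in the $H$-space $\Omega\djs(K)$ yields $s\colon T^m\to\Omega\djs(K)$, and a direct coordinate-wise check gives $p\circ s\simeq\mathrm{id}$ (on the $i$-th circle $s$ hits the loop of the $i$-th coordinate inclusion, which $p$ sends back to the $i$-th circle of $T^m$). Hence $p_*$ is surjective, and the space-level splitting $\Omega\djs(K)\simeq\Omega\zk\times T^m$ recalled in the text follows. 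K\"unneth then yields the module and coalgebra isomorphism
\[
  H_*(\Omega\djs(K);\k)\cong H_*(\Omega\zk;\k)\otimes\Lambda[u_1,\ldots,u_m],
\]
from which injectivity of the left-hand map of~\eqref{commutator} is immediate.

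The main obstacle is establishing exactness in the middle \emph{as algebras}, since $s$ is not an $H$-map and the displayed isomorphism is therefore not multiplicative. My plan is to interpret~\eqref{commutator} within the Milnor--Moore framework for connected Hopf-algebra extensions: it suffices to show that $H_*(\Omega\zk;\k)$ embeds as a normal sub-Hopf-algebra of $H_*(\Omega\djs(K);\k)$ whose Hopf-algebra quotient is $\Lambda[u_1,\ldots,u_m]$. Normality is a consequence of the fact that $\Omega\zk\to\Omega\djs(K)\to T^m$ is a fibration of $H$-spaces with $H$-maps and that $\Omega\zk$ arises as the honest homotopy fibre; identification of the quotient uses the algebra section $s_*$ together with the collapse of the Serre spectral sequence of the loop fibration, forced by the freeness of $H_*(T^m;\k)$ and the space-level triviality of the fibration. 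Combining the injection, the surjection, and this Hopf-kernel identification delivers the short exact sequence~\eqref{commutator}.
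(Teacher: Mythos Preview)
The paper does not prove this proposition; it is quoted from \cite[(8.2)]{pa-ra08} with no argument given, so there is no in-paper proof to compare against.

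Your outline is the standard one and is essentially what lies behind the cited result: loop the fibration, build the section $s$ from the coordinate circles, deduce the module/coalgebra splitting, and identify $H_*(\Omega\zk)$ with the Hopf kernel of $p_*$. Two places deserve more care before this becomes a proof. First, the phrase ``normality is a consequence of the fact that $\Omega\zk$ arises as the honest homotopy fibre'' is exactly the point at issue; you should name the result you are invoking (for instance, that in a multiplicative fibration whose Serre spectral sequence collapses, the fibre homology coincides with the cotensor product $H_*(\Omega\djs(K))\,\Box_{\Lambda[u_i]}\,\k$, equivalently the sub-Hopf-algebra of $\Lambda[u_i]$-primitives). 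Second, your K\"unneth and Milnor--Moore steps are written for a field, while the statement also allows $\k=\Z$; you should remark that the integral case goes through because $H_*(T^m;\Z)$ is free, so the Serre and K\"unneth spectral sequences still collapse and the Hopf-kernel description remains valid.
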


In what follows we shall often omit the coefficient ring $\k$ in
the notation of (co)homology.

The exterior algebra $\Lambda[u_1,\ldots,u_m]$ can be thought of
as the abelianisation of a largely noncommutative algebra
$H_*( \Omega\djs(K))$ (we expand on this below), so that
$H_*( \Omega\zk)$ is its commutator subalgebra.

The \emph{face ring} of $K$ (also known as the
\emph{Stanley--Reisner ring}) is defined as the quotient of the
polynomial algebra $\k[v_1,\ldots,v_m]$ by the square-free
monomial ideal generated by non-simplices of~$K$
\[
  \k[K]=\k[v_1,\ldots,v_m]\big/\bigl(v_{i_1}\cdots v_{i_k}\ |\
  \{i_1,\ldots,i_k\}\notin K\bigr).
\]
We make it graded by setting $\deg v_i=2$.

\begin{theorem}[{\cite{da-ja91}, \cite[Prop.~3.4.3]{bu-pa00}}]
There is an isomorphism of graded commutative algebras
\[
  H^*\bigl(\djs(K);\k\bigr)\cong\k[K]
\]
for any coefficient ring $\k$.
\end{theorem}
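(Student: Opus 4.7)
The plan is to show that the inclusion $j\colon\djs(K)\hookrightarrow(\C P^\infty)^m$ induces, after identifying the right-hand cohomology with $\k[v_1,\ldots,v_m]$, a surjective graded ring homomorphism whose kernel is precisely the Stanley--Reisner ideal of $K$. Since both the target and source are rings and the map is continuous, the induced map on cohomology is a ring homomorphism automatically, so the problem reduces to one of additive bases and a single kernel computation.

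First I would equip $\C P^\infty$ with its standard CW structure having a unique cell in each even dimension, and give $(\C P^\infty)^m$ the product CW structure. Its cells $e_\alpha=e_{2a_1}\times\cdots\times e_{2a_m}$ are indexed by $\alpha=(a_1,\ldots,a_m)\in\Z_{\ge 0}^m$, and the cellular cochain complex has trivial differential because all cells are in even dimensions. Consequently $H^*((\C P^\infty)^m;\k)$ is the free $\k$-module on the cell duals, which we identify with the monomials $v_1^{a_1}\cdots v_m^{a_m}\in\k[v_1,\ldots,v_m]$. For any face $I\in K$, the subspace $(\C P^\infty)^I$ is a subcomplex containing exactly those product cells with $\mathrm{supp}(\alpha):=\{i:a_i>0\}\subseteq I$. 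Hence $\djs(K)=\bigcup_{I\in K}(\C P^\infty)^I$ is a CW subcomplex whose cells are precisely those $e_\alpha$ with $\mathrm{supp}(\alpha)\in K$.

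Combining these observations, $H^*(\djs(K);\k)$ is additively the free $\k$-module with basis the monomials $v_1^{a_1}\cdots v_m^{a_m}$ whose support is a face of~$K$, which is exactly the standard $\k$-basis of the face ring $\k[K]$. Since $j$ is cellular, the induced ring map $j^*$ is surjective and sends $v_i$ to the class labelled by $v_i$ in $\djs(K)$. For a non-face $\{i_1,\ldots,i_k\}\notin K$, every $I\in K$ omits some $i_j$, so the cell labelled by $v_{i_1}\cdots v_{i_k}$ is not a cell of $\djs(K)$; equivalently $j^*(v_{i_1}\cdots v_{i_k})=0$. Thus the Stanley--Reisner ideal is contained in $\ker j^*$, and comparing graded $\k$-dimensions in each degree using the basis above shows this containment is an equality, giving the desired isomorphism $H^*(\djs(K);\k)\cong\k[K]$.

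The one step requiring care is the compatibility between the cell-based additive identification and the multiplicative structure. This is handled for free by working through $j^*$: because $j^*$ is a ring homomorphism, surjective, and has the Stanley--Reisner ideal as kernel, the ring structure on $H^*(\djs(K))$ is forced to be that of $\k[K]$. The main technical obstacle is therefore just the careful book-keeping of which product cells of $(\C P^\infty)^m$ lie in $(\C P^\infty)^I$ for which $I$, and checking that the union over $I\in K$ really is a subcomplex; everything else is formal.
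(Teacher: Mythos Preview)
Your proof is correct and is the standard cellular argument. The paper itself does not give a proof of this theorem, citing it instead from \cite{da-ja91} and \cite[Prop.~3.4.3]{bu-pa00}; the argument in the latter reference proceeds along exactly the cellular lines you describe, using that $\djs(K)$ is the subcomplex of $(\C P^\infty)^m$ consisting of the product cells whose support is a face of~$K$.

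One small wording issue: your phrase ``comparing graded $\k$-dimensions'' presumes $\k$ is a field, whereas the statement is for an arbitrary coefficient ring. Your own setup already delivers the stronger conclusion without this: since the cellular restriction map $j^*$ sends the basis element $e_\alpha^*$ to the corresponding cell dual in $\djs(K)$ when $\mathrm{supp}(\alpha)\in K$ and to zero otherwise, the kernel of $j^*$ is \emph{exactly} the free $\k$-submodule spanned by the monomials $v^\alpha$ with $\mathrm{supp}(\alpha)\notin K$, which is the Stanley--Reisner ideal. No rank or dimension comparison is needed.
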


The cohomology ring $H^*(\zk;\k)$ and the Pontryagin algebra $H_*(
\Omega\djs(K);\k)$ decode different homological invariants of the
face ring~$\k[K]$, as is stated next.

\begin{theorem}[{\cite[Th.~5.3.4]{bu-pa00}}]\label{odjcoh}
If $\k$ is a field, then there is an isomorphism of graded
noncommutative algebras
\[
  H_*\bigl( \Omega\djs(K);\k\bigr)\cong\Ext_{\k[K]}(\k,\k)
\]
where $\Ext_{\k[K]}(\k,\k)$ is the Yoneda algebra of~$\k[K]$.
\end{theorem}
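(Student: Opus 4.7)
The plan is to derive the isomorphism from the Eilenberg--Moore spectral sequence of the path--loop fibration of $\djs(K)$, combined with the formality of $\djs(K)$ and standard duality between $\Tor$ and $\Ext$. I would start from
\[
\Omega\djs(K)\longrightarrow P\djs(K)\longrightarrow\djs(K).
\]
Since $\djs(K)$ is simply connected (it is assembled from copies of $\C P^\infty=K(\Z,2)$), the Eilenberg--Moore spectral sequence converges strongly with $E_2$-term
\[
E_2^{*,*}=\Tor_{H^*(\djs(K);\k)}(\k,\k)=\Tor_{\k[K]}(\k,\k),
\]
where the second equality uses the preceding theorem, and the spectral sequence abuts to $H^*(\Omega\djs(K);\k)$ as a graded algebra.

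The key step is collapse at $E_2$ together with the identification of the multiplicative structure on the abutment. This follows from formality of $\djs(K)$, which here is essentially tautological: viewing $\djs(K)$ as a subcomplex of $(\C P^\infty)^m$ through the polyhedral--product construction gives it a CW structure whose cells lie only in even degrees, so the cellular cochain complex carries zero differential and coincides with $\k[K]$ on the nose. Hence $C^*(\djs(K);\k)$ is quasi-isomorphic to its cohomology as a differential graded algebra, the Eilenberg--Moore spectral sequence collapses at $E_2$ with no hidden multiplicative extensions, and one obtains an isomorphism
\[
H^*(\Omega\djs(K);\k)\cong\Tor_{\k[K]}(\k,\k)
\]
of graded algebras.

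Finally, I pass to homology. Since $\k[K]$ is finitely generated, $\Tor_{\k[K]}(\k,\k)$ has finite type in each bidegree, so $H_*(\Omega\djs(K);\k)$ is the graded dual of $H^*(\Omega\djs(K);\k)$. The classical identification $\Ext_A(\k,\k)\cong\Tor_A(\k,\k)^\vee$, valid for any augmented graded $\k$-algebra $A$ of finite type, matches the Yoneda product on the left with the dual of the diagonal coproduct on the right induced by any choice of diagonal on a free resolution. Specializing to $A=\k[K]$ produces the desired isomorphism of graded noncommutative algebras. The main obstacle in this approach is the multiplicative collapse of the Eilenberg--Moore spectral sequence; the even-dimensional CW structure of $\djs(K)$ disposes of it at once, and everything else is standard homological algebra.
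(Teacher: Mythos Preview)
Your proof is correct and runs parallel to the paper's: both arguments use the path--loop fibration together with the formality of $\djs(K)$. The paper, however, invokes the Adams \emph{cobar} spectral sequence, whose $E_2$-term $\mathop{\mathrm{Cotor}}_{H_*(\djs(K))}(\k,\k)\cong\Ext_{\k[K]}(\k,\k)$ converges directly to $H_*(\Omega\djs(K))$ as an algebra; formality of $\djs(K)$ then forces collapse and yields the isomorphism without any dualisation. Your cohomological Eilenberg--Moore route first produces $H^*(\Omega\djs(K))\cong\Tor_{\k[K]}(\k,\k)$ and then dualises to reach $\Ext$, which is precisely the mirror image of the paper's argument. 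The cobar version is marginally more direct since it lands in homology and in $\Ext$ from the outset; your version has the virtue of making the $\Tor$/$\Ext$ duality explicit. One small remark: your claim that the cellular cochain complex of $\djs(K)$ ``coincides with $\k[K]$ on the nose'' as a DGA presupposes a multiplicative cellular model, which does exist (indeed this is part of what the cited Buchstaber--Panov results establish) but is not an automatic consequence of having cells only in even degrees.
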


This is proved by applying the Adams cobar spectral sequence to the
loop fibration $ \Omega\djs(K)\to\mathcal P\djs(K)\to\djs(K)$, where $\mathcal P\djs(K)$ is the space of based paths in $\djs(K)$ and using
the formality of $\djs(K)$.

\begin{theorem}[\cite{bu-pa00}, \cite{b-b-p04},
\cite{fran06}]\label{zkcoh}
If $\k$ is a field or $\Z$, then there
are isomorphisms of (bi)graded commutative algebras
\begin{align*}
  H^*(\zk)&\cong\Tor_{\k[v_1,\ldots,v_m]}\bigl(\k[K],\k\bigr)\\
  &\cong H\bigl[\Lambda[u_1,\ldots,u_m]\otimes\k[K],d\bigr]\\
  &\cong \bigoplus_{I\subset[m]}\widetilde H^*(K_I).
\end{align*}
Here, the second row is the cohomology of the differential
bigraded algebra with $\bideg u_i=(-1,2)$, $\bideg v_i=(0,2)$ and
$du_i=v_i$, $dv_i=0$ (the Koszul complex). In the third row,
$\widetilde H^*(K_I)$ denotes the reduced simplicial cohomology
of the full subcomplex $K_I\subset K$ (the restriction of $K$
to $I\subset[m]$). The last isomorphism is the sum of isomorphisms
\[
  H^p(\zk)\cong
  \sum_{I\subset[m]}\widetilde H^{p-|I|-1}(K_I),
\]
and the ring structure (the Hochster ring) is given by the maps
\[
  H^{p-|I|-1}(K_I)\otimes H^{q-|J|-1}(K_J)\to
  H^{p+q-|I|-|J|-1}(K_{I\cup J})
\]
which are induced by the canonical simplicial maps $K_{I\cup
J}\to K_I*K_J$ for $I\cap J=\varnothing$ and zero otherwise.
\end{theorem}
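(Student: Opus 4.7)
The plan is to establish the three isomorphisms in turn and then verify the Hochster ring structure.

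For the middle isomorphism $\Tor_R(\k[K],\k)\cong H[\Lambda[u_1,\ldots,u_m]\otimes\k[K],d]$ with $R=\k[v_1,\ldots,v_m]$, I would first observe that the total Koszul complex $\Lambda[u_1,\ldots,u_m]\otimes R$ is a multiplicative free resolution of $\k$ over $R$, being the tensor product of one-variable Koszul resolutions $R\stackrel{v_i}{\to}R\to\k$ (each $v_i$ is a non-zero-divisor). Tensoring this resolution over $R$ with $\k[K]$ yields the Koszul complex $(\Lambda[u_1,\ldots,u_m]\otimes\k[K],d)$, whose cohomology is $\Tor_R(\k[K],\k)$ by definition. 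The bigrading is transparent from $\bideg u_i=(-1,2)$ and $\bideg v_i=(0,2)$.

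For the first isomorphism $H^*(\zk)\cong\Tor_R(\k[K],\k)$, I would use the homotopy fibration $\zk\to\djs(K)\to(\C P^\infty)^m=BT^m$ cited earlier in the paper. The Eilenberg--Moore spectral sequence for this fibration has $E_2=\Tor_{H^*(BT^m)}(H^*(\djs(K)),\k)=\Tor_R(\k[K],\k)$, and collapses at $E_2$ with trivial multiplicative extensions, since the explicit Koszul model above serves as a formal resolution. Over $\Z$, where the Eilenberg--Moore argument is more delicate, the cleanest route is to build a cell decomposition of $\zk$ whose cellular cochain complex is isomorphic, as a differential graded algebra, to the Koszul complex, thereby sidestepping spectral sequences altogether.

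For the third isomorphism, introduce the fine $\Z^m$-grading by placing both $u_i$ and $v_i$ in multidegree $e_i$; the differential preserves multidegree, so the Koszul complex splits as a direct sum indexed by $\alpha\in\Z_{\ge 0}^m$. A standard contracting homotopy argument shows that cohomology is concentrated in square-free multidegrees, identified with subsets $I\subset[m]$. In multidegree $I$, the subcomplex is spanned by elements $u_J\cdot v_{I\setminus J}$ for $J\subset I$ with $I\setminus J\in K$; identifying each such generator with the dual cochain of the simplex $I\setminus J\in K_I$, the Koszul differential $d(u_J v_{I\setminus J})=\sum_{i\in J}\pm u_{J\setminus i}v_{(I\setminus J)\cup i}$ matches the reduced simplicial coboundary on $K_I$ up to sign and degree shift. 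Tracking degrees yields $\widetilde H^{p-|I|-1}(K_I)$ in cohomological degree $p$, and summing over $I$ gives the stated decomposition.

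The Hochster ring structure is inherited from the multiplication on the Koszul complex: a product $(u_J v_{I\setminus J})\cdot(u_{J'}v_{I'\setminus J'})$ lives in multidegree $I+I'$ and vanishes on cohomology if $I\cap I'\ne\varnothing$ (the multidegree is non-square-free, hence acyclic); for $I\cap I'=\varnothing$ it corresponds, under the dictionary above, to the canonical join map $K_{I\cup I'}\to K_I*K_{I'}$ in reduced simplicial cohomology. The main technical obstacle is the integral version of $H^*(\zk)\cong\Tor_R(\k[K],\k)$, handled most cleanly via a direct cellular model rather than Eilenberg--Moore; secondary difficulties are sign-tracking in the Hochster identification and checking compatibility of the ring structures across the three isomorphisms.
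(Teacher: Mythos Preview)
The paper does not prove this theorem: it is stated as a background result with references to \cite{bu-pa00}, \cite{b-b-p04}, \cite{fran06}, and no proof is given in the text. Your sketch is a correct outline of the standard argument found in those sources. In particular, the identification of $\Tor$ with the cohomology of the Koszul complex is textbook, the Hochster-type splitting via the $\Z^m$-multigrading and the square-free reduction is exactly how the third isomorphism is usually obtained, and your remark that the integral case is best handled by an explicit cellular cochain model (rather than by Eilenberg--Moore) is precisely the content of \cite{b-b-p04} and \cite{fran06}. So there is nothing to compare against in this paper; your proposal simply reconstructs the proofs the paper is citing.
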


In~\cite{gr-th07} several classes of complexes~$K$ have been
identified for which $\zk$ has homotopy type of a wedge of
spheres. These include all skeleta of simplices, and the so-called
\emph{shifted} complexes. One special case which we shall refer to
several times later is when $K$ is a disjoint union of finitely many vertices.

\begin{theorem}[{\cite{gr-th04}}]\label{disjointpoints}
   Let $K$ be the disjoint union of $m$ points. Then there is a
   homotopy equivalence
   \[\zk\simeq\bigvee_{\ell=2}^{m} (S^{\ell+1})^{\vee(\ell-1)\binom{m}{\ell}}.\]
\end{theorem}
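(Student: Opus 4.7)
The plan is to build a map from the proposed wedge $W=\bigvee_{\ell=2}^m(S^{\ell+1})^{\vee(\ell-1)\binom{m}{\ell}}$ into $\zk$, verify that it induces an isomorphism on cohomology via the Hochster decomposition of Theorem~\ref{zkcoh}, and then invoke Whitehead's theorem. The cohomology computation is the clean starting point. Every full subcomplex $K_I\subset K$ is itself a discrete set of $|I|$ points, so $\widetilde H^0(K_I)\cong\Z^{|I|-1}$ and $\widetilde H^j(K_I)=0$ for $j\ge 1$. Summing over $I$,
\[
H^p(\zk)\;\cong\;\bigoplus_{|I|=p-1}\widetilde H^0(K_I)
\]
is free of rank $(\ell-1)\binom{m}{\ell}$ in degree $\ell+1$ for $\ell=2,\dots,m$, matching $H^*(W)$. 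The Hochster product formula forces every cup product to vanish, since any product must land in a summand $\widetilde H^{>0}(K_{I\cup J})$, which is zero. In particular $\zk$ is $2$-connected with torsion-free cohomology.

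Next I build the map by induction on $m$. For each $I\subset[m]$ the coordinate projection defines a retraction $r_I\colon\zk\to\z_{K_I}$ that is split by the inclusion $\iota_I\colon\z_{K_I}\hookrightarrow\zk$ filling in basepoints on coordinates outside $I$. For $|I|=\ell<m$ the inductive hypothesis gives $\z_{K_I}\simeq\bigvee_{k=2}^{\ell}(S^{k+1})^{\vee(k-1)\binom{\ell}{k}}$, whose $(\ell-1)$ top-dimensional summands compose with $\iota_I$ to yield $(\ell-1)\binom{m}{\ell}$ maps $S^{\ell+1}\to\zk$. For $I=[m]$ the remaining $(m-1)$ top summands $S^{m+1}\to\zk$ are constructed as iterated higher Whitehead products of the fundamental classes $\mu_i\colon S^2\hookrightarrow\C P^\infty$ on the wedge summands of $\djs(K)=\bigvee_{i=1}^m\C P^\infty$, lifted along the fibration $\zk\to\djs(K)\to(\C P^\infty)^m$; such lifts exist because the relevant lower brackets vanish after projection to the product torus.

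Assembled, these maps give $f\colon W\to\zk$. For each $I$, pre-composing $f$ with $r_I$ realises on cohomology the inclusion of the top-dimensional Hochster summand $\widetilde H^0(K_I)\subset H^{|I|+1}(\z_{K_I})$; summing over $I$ recovers the full Hochster direct-sum decomposition of $H^*(\zk)$, so $f^*$ is an isomorphism in every degree. Since $W$ and $\zk$ are simply connected CW complexes with free cohomology, Whitehead's theorem upgrades this to a homotopy equivalence. The principal technical obstacle is the $I=[m]$ step: one must produce exactly $(m-1)$ linearly independent higher Whitehead brackets in $\pi_{m+1}(\zk)$ whose Hurewicz images fill the top summand $\widetilde H^0(K_{[m]})\cong\Z^{m-1}$. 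This counting is cleanest inside the Pontryagin algebra $H_*(\Omega\zk)$, whose structure as the commutator subalgebra of $H_*(\Omega\djs(K))$ in the exact sequence~\eqref{commutator} is developed in subsequent sections of the paper.
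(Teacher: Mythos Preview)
The paper does not give its own proof of Theorem~\ref{disjointpoints}; the result is quoted from~\cite{gr-th04}. So there is nothing in the paper to compare against, and I assess your attempt on its own merits.

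Your Hochster computation of $H^*(\zk)$ is correct, as is the triviality of products. The inductive scheme for producing spheres $S^{\ell+1}\to\zk$ for $\ell<m$ via the split inclusions $\iota_I\colon\z_{K_I}\hookrightarrow\zk$ is sound, and the block-triangular argument (using $r_I\circ\iota_I=\mathrm{id}$ and the fact that $r_I\circ\iota_{I'}$ factors through $\z_{K_{I\cap I'}}$, which has top cohomology in degree $<\ell+1$ when $I'\ne I$ and $|I'|=|I|=\ell$) shows that the partial map induces an isomorphism on cohomology through degree~$m$. (Minor slip: you mean post-composing $f$ with $r_I$, not pre-composing.)

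The genuine gap is exactly where you flag it: the top summand $I=[m]$. You need $(m-1)$ classes in $\pi_{m+1}(\zk)$ whose Hurewicz images form a basis of $H_{m+1}(\zk)\cong\Z^{m-1}$, and since $\zk$ is only $2$-connected no Hurewicz theorem supplies them for free. You gesture at iterated Whitehead products of the $\overline\mu_i$ but do not verify that any specific $(m-1)$ of them have independent Hurewicz images. Worse, your proposed remedy---that the counting ``is cleanest inside the Pontryagin algebra $H_*(\Omega\zk)$, whose structure \ldots\ is developed in subsequent sections of the paper''---is circular: Lemma~\ref{CNzk} (inside the proof of Theorem~\ref{multgen}) explicitly invokes Theorem~\ref{disjointpoints} to conclude that $H_*(\Omega\zk)$ is free on the stated number of generators. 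So the later material cannot be used to finish this proof without begging the question. The original sources~\cite{gr-th04} and~\cite{gr-th07} proceed by a different route, analysing the homotopy fibre of $\bigvee_{i=1}^m\C P^\infty\to\prod_{i=1}^m\C P^\infty$ directly, which avoids this difficulty.
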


Further, in~\cite{gr-th07} it was shown that there is a way to build
new complexes $K$ whose corresponding $\zk$ is a wedge of spheres
from existing ones.

\begin{theorem}[{\cite[Th.~10.1]{gr-th07}}]
Assume that $\mathcal Z_{K_1}$ and $\mathcal Z_{K_2}$ both
have homotopy type of a wedge of spheres, and $K$ is obtained by
attaching $K_1$ to $K_2$ along a common face. Then $\zk$ also
has homotopy type of a wedge of spheres.
\end{theorem}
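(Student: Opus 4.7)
The plan is to deduce the wedge-of-spheres decomposition of $\zk$ from the natural pushout induced by $K = K_1 \cup_{\Delta(\sigma)} K_2$. Write $V_1, V_2$ for the vertex sets of $K_1, K_2$ and let $V_\sigma = V_1 \cap V_2$ be the vertices of the common face $\sigma$, with $V = V_1 \cup V_2$, $m = |V|$ and $k = |V_\sigma|$. Inside $(D^2)^V$, a point lies in $\zk$ if and only if $\{v : |z_v|<1\}$ is a simplex of $K$; splitting on whether this simplex lies in $K_1$ or in $K_2$ yields
\[
\zk = A \cup B, \qquad A = \zk_1 \times (S^1)^{V_2 \setminus V_\sigma}, \quad B = (S^1)^{V_1 \setminus V_\sigma} \times \zk_2,
\]
with intersection $A \cap B = (D^2)^{V_\sigma}\times (S^1)^{V\setminus V_\sigma}$ deformation retracting onto the torus $T^{m-k}$.

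With this pushout in hand, I would analyse the combinatorics of $\zk$ via Hochster's formula (Theorem~\ref{zkcoh}). For any $I \subseteq V$, split $I = I_1 \sqcup I_\sigma \sqcup I_2$ with $I_j \subseteq V_j \setminus V_\sigma$ and $I_\sigma \subseteq V_\sigma$; then $K_I = (K_1)_{I_1 \cup I_\sigma} \cup_{\Delta(I_\sigma)} (K_2)_{I_\sigma \cup I_2}$. Applying Mayer--Vietoris (noting that $\Delta(I_\sigma)$ is contractible when $I_\sigma\neq\varnothing$, and that a disjoint union contributes an extra reduced $H^0$ when $I_\sigma = \varnothing$), one computes $\widetilde H^*(K_I)$ in terms of $\widetilde H^*$ of the two halves plus a correction term. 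Since by hypothesis $\zk_1, \zk_2$ are wedges of spheres, the corresponding full subcomplexes $(K_i)_J$ have free cohomology concentrated in a single degree, so the total Betti numbers of $\zk$ match those of a wedge of spheres. I would combine this with an induction on $\dim\sigma$, with base cases $\sigma = \varnothing$ (modelled on Theorem~\ref{disjointpoints}) and $\sigma = \{v\}$, in which one can attempt to produce a direct homotopy equivalence.

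The main obstacle is that having the correct Betti numbers is not enough: one must also exhibit a homotopy equivalence $\zk \simeq \bigvee_\alpha S^{n_\alpha}$, which requires that all attaching maps beyond the bottom cell are null-homotopic. The pieces $A = \zk_1 \times T^{m_2 - k}$ and $B = T^{m_1 - k} \times \zk_2$ are \emph{not} wedges of spheres (a product like $S^p \times S^1$ has a nontrivial top-degree cup product), so the pushout cannot produce a wedge by a purely formal manipulation. The key step, which is the heart of the proof in~\cite{gr-th07}, is to show that the inclusions $T^{m-k} \hookrightarrow A$ and $T^{m-k} \hookrightarrow B$, which factor through the ``torus at infinity'' sitting over the empty simplex of each $K_i$, are so placed that in the gluing all attaching maps responsible for the bad cup products in $A$ and $B$ become null-homotopic. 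Establishing this via an explicit cofibration argument, and then assembling the surviving cells into a wedge-of-spheres decomposition compatible with the given decompositions of $\zk_1$ and $\zk_2$, is what I expect to be the most delicate part of the proof.
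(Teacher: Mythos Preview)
The paper does not give its own proof of this theorem: it is quoted from \cite[Th.~10.1]{gr-th07} and then used as a black box to derive Corollary~\ref{orderws}. So there is no in-paper argument to compare your proposal against.

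On the merits of your outline: the pushout decomposition
\[
  \zk \;=\; \bigl(\mathcal Z_{K_1}\times T^{m_2-k}\bigr)\ \cup_{\,(D^2)^k\times T^{m-k}}\ \bigl(T^{m_1-k}\times\mathcal Z_{K_2}\bigr)
\]
is correct and is indeed the starting point of the argument in~\cite{gr-th07}. You also correctly diagnose the real obstacle: the pieces $A$ and $B$ are products, not wedges, and a Betti-number count via Hochster's formula cannot by itself determine the homotopy type. But your proposal stops exactly there. You state what must be shown---that the attaching maps responsible for the nontrivial products in $A$ and $B$ become null-homotopic after gluing---and then defer it to ``an explicit cofibration argument'' without indicating what that argument is. That deferred step is the entire content of the theorem; the pushout setup is routine. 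As written, this is an accurate identification of the problem rather than a proof.

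For reference, the actual argument in~\cite{gr-th07} does not proceed by analysing attaching maps directly. It works instead with the homotopy cofibre of the inclusion of the common torus and uses the splitting of $\varSigma\zk_i$ together with the fact that the torus factors split off stably, reducing the question to one about suspensions where the relevant maps can be controlled.
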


\begin{corollary}\label{orderws}
Assume that there is an order $I_1,\ldots,I_s$ of the maximal
faces of~$K$ such that $\bigr(\bigcup_{j<k}I_j\bigl)\cap I_k$ is
a single face for each $k=1,\ldots,s$. Then $\zk$ has homotopy
type of a wedge of spheres.
\end{corollary}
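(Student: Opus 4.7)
The plan is to induct on the number $s$ of maximal faces, using the preceding attachment theorem as the inductive engine. Write $K^{(k)}$ for the subcomplex of $K$ generated by $I_1,\dots,I_k$ and $\Delta^I$ for the full simplex on a vertex set~$I$; the inductive idea is to view $K$ as $K^{(s-1)}\cup\Delta^{I_s}$ glued along a common face, and apply the preceding theorem.

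For the base case $s=1$ the unique maximal face must be all of $[m]$, so $K=\Delta^{[m]}$ and $\zk=(D^2)^m$ is contractible, which we regard as the empty wedge of spheres. For the inductive step, note that the ordering $I_1,\dots,I_{s-1}$ still satisfies the hypothesis of the corollary with respect to $K^{(s-1)}$, so by induction $\mathcal Z_{K^{(s-1)}}$ has the homotopy type of a wedge of spheres. The space $\mathcal Z_{\Delta^{I_s}}$ is a product of disks and hence contractible. The hypothesis identifies the intersection $K^{(s-1)}\cap\Delta^{I_s}$ with the full simplex $\Delta^{\sigma_s}$ on the vertex set $\sigma_s:=\bigl(\bigcup_{j<s}I_j\bigr)\cap I_s$; since this is a common face of $K^{(s-1)}$ and $\Delta^{I_s}$, the attachment theorem applied to $K_1=K^{(s-1)}$ and $K_2=\Delta^{I_s}$ gives that $\zk$ has the homotopy type of a wedge of spheres, closing the induction.

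The main obstacle is pinning down the identification $K^{(s-1)}\cap\Delta^{I_s}=\Delta^{\sigma_s}$. One inclusion is direct: any face in the intersection lies in $I_s$ and in some $I_j$ with $j<s$, so its vertex set lies in $\sigma_s$. The reverse inclusion encodes the substantive content of the ``single face'' hypothesis and amounts to the statement that $\sigma_s$ is itself a face of $K^{(s-1)}$, not merely a face of $K$; this is the precise form of the hypothesis on which the induction depends, after which the attachment theorem does the rest of the work.
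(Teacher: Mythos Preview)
Your proof is correct and follows exactly the approach the paper intends: the corollary is stated there without proof as an immediate consequence of the preceding attachment theorem via induction on~$s$, and you have fleshed this out correctly, including the right reading of the ``single face'' hypothesis as the statement that $\sigma_s$ is a face of the subcomplex $K^{(s-1)}$ (equivalently, that the intersection $K^{(s-1)}\cap\Delta^{I_s}$ of simplicial complexes is a single simplex), which is precisely what is needed to invoke the attachment theorem.
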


%The property of $\zk$ being a wedge of spheres reflects
%algebraically in both $H^*(\zk)$ and $H_*( \Omega\djs(K))$:
%the former has trivial multiplication and the latter is a free
%algebra; furthermore, the Poincar\'e series of both algebras can
%be expressed in terms of each other. (The \emph{Poincar\'e series}
%of a graded $\k$-vector space $A=\bigoplus_{i\ge0}A^i$ is given by
%$P(A;t)=\sum_{i\ge0}\dim A^i$.)

\section{The Golod property}
\label{sec:golod}

In this section we give topological interpretations of the Golod
property. The face ring $\k[K]$ is called \emph{Golod}
(cf.~\cite{gu-le69}) if the multiplication and all higher Massey
operations in $\Tor_{\k[v_1,\ldots,v_m]}\bigl(\k[K],\k\bigr)$ are
trivial. The Golod property can be defined for general graded or
local Noetherian rings. Several combinatorial criteria for
Golodness were given in~\cite{h-r-w99}. We say that the simplicial
complex $K$ is Golod if $\k[K]$ is a Golod ring. In view of
Theorem~\ref{zkcoh}, the Golod property is an algebraic
approximation to the property of $\zk$ being homotopy equivalent
to a wedge of spheres, although this approximation is not exact as
Example~\ref{rp2ex} below shows. By a result of Berglund and
J\"ollenbeck~\cite[Th.~5.1]{be-jo07}, $K$ is a Golod complex if
the multiplication in
$\Tor_{\k[v_1,\ldots,v_m]}\bigl(\k[K],\k\bigr)$ is trivial, i.e.
there is no need to check the triviality of higher Massey products
in the case of face rings.

Our main result in this section is Theorem~\ref{golodcond}, but before
stating this we give a more general result which is of independent interest.
Recall that the \emph{Poincar\'e series} of a graded $\k$-module
$A=\bigoplus_{i\ge0}A^i$ is given by $P(A;t)=\sum_{i\ge0}\dim A^i$.

\begin{proposition}
\label{trivmult}
Let $X$ be a simply-connected CW-complex such that
$H_*( \Omega X;\k)$ is a graded free associative algebra, where $\k$ is a
field. Then $H^*(X;\k)$ has trivial multiplication.
\end{proposition}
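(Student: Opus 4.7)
The plan is to use Adams' cobar theorem together with bar-cobar duality to translate the freeness hypothesis on $H_*(\Omega X;\k)$ into triviality of the cup product on $H^*(X;\k)$, via chain-level comparisons.

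First, since $X$ is simply-connected, Adams' theorem provides a quasi-isomorphism of DG coalgebras $C_*(X;\k) \simeq B\bigl(C_*(\Omega X;\k)\bigr)$, where $B$ denotes the reduced bar construction. Next, I would invoke intrinsic formality of free associative DGAs: given $H_*(\Omega X;\k)\cong T(V)$, I would lift a homogeneous basis of $V$ to cycles in $C_*(\Omega X;\k)$ and extend multiplicatively to obtain a DGA map $T(V)\to C_*(\Omega X;\k)$ which is an isomorphism on homology by construction, hence a quasi-isomorphism. Combining,
\[
C_*(X;\k) \;\simeq\; B(T(V)) \quad\text{as DG coalgebras.}
\]

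The final step is bar-cobar duality. The tensor algebra $T(V)$ with zero differential is the cobar construction $\Omega(\k\oplus sV)$ of the graded coalgebra $\k\oplus sV$ equipped with zero differential and trivial (all-primitive) coproduct. The unit of the bar-cobar adjunction yields a quasi-isomorphism $\k\oplus sV \to B\Omega(\k\oplus sV) = B(T(V))$, so $H_*(X;\k)\cong \k\oplus sV$ as graded coalgebras, with every positive-degree element primitive. Dualising gives the desired triviality of the cup product on $H^+(X;\k)$.

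The main obstacle is invoking Adams' cobar theorem and bar-cobar duality in sufficient DG generality over an arbitrary field $\k$; both are classical for simply-connected spaces but require some $A_\infty$ or DG (co)algebra machinery. A more elementary alternative would run through the Eilenberg-Moore spectral sequence $\Tor_{H^*(X;\k)}(\k,\k)\Rightarrow H^*(\Omega X;\k)$: the bar complex yields the upper bound $P(H^*(\Omega X);t)\le t/\bigl(t+1-P(H^*(X);t)\bigr)$, achieved precisely when the multiplication in $H^*(X;\k)$ vanishes, while the hypothesis gives $P(H^*(\Omega X);t)=1/(1-P(V;t))$; one would then aim to match these and force collapse of the bar differential on $H^*(X;\k)$.
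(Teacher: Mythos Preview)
Your argument is correct, and it is genuinely different from the paper's. You work at chain level: use $C_*(X)\simeq B\bigl(C_*(\Omega X)\bigr)$ as DG coalgebras, invoke intrinsic formality of free associative algebras to replace $C_*(\Omega X)$ by $T(V)$ with zero differential, and then read off $H_*(X)\cong\k\oplus sV$ with trivial reduced coproduct from the unit $\k\oplus sV\xrightarrow{\sim} B\Omega(\k\oplus sV)=B\,T(V)$. The paper instead runs a numerical squeeze: from the bar spectral sequence $\Tor_{H_*(\Omega X)}(\k,\k)\Rightarrow H_*(X)$ it gets $P(\varSigma^{-1}\widetilde H_*(X);t)\le P(Q;t)$, and from the cobar spectral sequence $\mathrm{Cotor}_{H_*(X)}(\k,\k)\Rightarrow H_*(\Omega X)$ it gets $P(H_*(\Omega X);t)\le P(T\langle\varSigma^{-1}\widetilde H_*(X)\rangle;t)$; the hypothesis $H_*(\Omega X)=T\langle Q\rangle$ forces all inequalities to be equalities and both spectral sequences to collapse at $E_2$. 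The paper then finishes not by computing a coalgebra structure directly but by observing that the collapse makes the evaluation map $\varSigma\Omega X\to X$ surjective in homology, and since $\varSigma\Omega X$ is a suspension its reduced diagonal vanishes, hence so does that of $H_*(X)$.

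Your route is more conceptual and identifies $H_*(X)$ on the nose as a coalgebra, at the cost of importing DG coalgebra machinery (Adams' equivalence as a coalgebra quasi-isomorphism, preservation of quasi-isomorphisms by $B$, connectivity hypotheses for the bar--cobar unit). The paper's route is more elementary and self-contained, needing only the two Eilenberg--Moore-type spectral sequences and a Poincar\'e-series count; the suspension trick at the end avoids any direct coalgebra computation. Your ``alternative'' sketch via a single Eilenberg--Moore spectral sequence is close in spirit to the paper but would still need the matching lower bound; the paper obtains this by running the companion (bar) spectral sequence in the other direction, which is exactly the missing ingredient in your alternative.
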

\begin{proof}
Let $
  Q=H_{>0}( \Omega X)\big/\bigl(H_{>0}( \Omega X)\cdot
  H_{>0}( \Omega X)\bigl)
$ be the space of indecomposable elements, so that $H_*( \Omega
X)=T\langle Q\rangle$ by assumption, where $T\langle Q\rangle$
denotes the free associative algebra on the graded
$\k$-module~$Q$.

Consider the Rothenberg--Steenrod (bar) spectral sequence, which
has $E_2$-term $E_2^{\mathrm{b}}=\Tor_{H_*( \Omega
X)}(\k,\k)$ and converges to $H_*(X)$. By assumption,
\[
  E^{\mathrm{b}}_2=\Tor_{T\langle Q\rangle}(\k,\k)\cong\k\oplus Q
\]
as a $\k$-module. We therefore obtain the following inequalities
for the Poincar\'e series:
\begin{equation}\label{bariq}
P\bigl(\varSigma^{-1}\widetilde H_*(X);t\bigr)=
P(E^{\mathrm{b}}_\infty;t)-1\le P(E^{\mathrm{b}}_2;t)-1=P(Q;t).
\end{equation}

Now consider the Adams (cobar) spectral sequence, which has $E_2$-term
$E^{\mathrm{c}}_2=\mathop{\mathrm{Cotor}}_{H_*(X)}(\k,\k)$ and
converges to $H_*( \Omega X)$. We have a series of inequalities:
\[
  P\bigl(H_*( \Omega X);t\bigr)
  =P(E^{\mathrm{c}}_\infty;t)\le P(E^{\mathrm{c}}_2;t)\le
  P\bigl(T\langle\varSigma^{-1}\widetilde H_*(X)\rangle;t\bigr)\le
  P\bigl(T\langle Q\rangle;t\bigr),
\]
where the second-to-last inequality follows from the cobar
construction (it turns to equality when all differentials in the
cobar construction on $H_*(X)$ are trivial), and the last
inequality follows from~\eqref{bariq}. Now, $P\bigl(H_*( \Omega
X);t\bigr)=P\bigl(T\langle Q\rangle;t\bigr)$ by assumption, so all
inequalities above turn into equalities, and both spectral
sequences collapse at the $E_2$-term. It follows from the collapse
of both spectral sequences that the homology map
\[
  \widetilde H_*(\varSigma \Omega X)=
  \varSigma\widetilde H_{*}( \Omega X)\to\widetilde H_*(X)
\]
induced by the evaluation $\varSigma \Omega X\to X$ is onto.
Consider the commutative diagram
\[
\xymatrix{
  \widetilde H_*(\varSigma \Omega X) \ar[r] \ar[d]_{\Delta} &
  \widetilde H_*(X)\ar[d]^\Delta\\
  \widetilde H_*(\varSigma \Omega X)\otimes
  \widetilde H_*(\varSigma \Omega X) \ar[r]
  & \widetilde H_*(X)\otimes\widetilde H_*(X)
}
\]
in which the vertical arrows are comultiplications, and the
horizontal ones are surjective. Since $\varSigma \Omega X$ is a
suspension, the left arrow is zero, hence, the right arrow is also
zero. By duality, the multiplication in $H^*(X)$ is trivial.
\end{proof}

The Golod property of $K$ has the following topological
interpretations.

\begin{theorem}\label{golodcond}
Let $\k$ be a field. The following conditions are equivalent:
\begin{itemize}
\item[(a)] $H_*( \Omega\zk)$ is a graded free
associative algebra;
\item[(b)] the multiplication in $H^*(\zk)$ is trivial;
\item[(c)] there is the following identity for the Poincar\'e
series:
\[
  P\bigl(H_*( \Omega\zk);t\bigr)=
  \frac1{1-P\bigl(\varSigma^{-1}\widetilde
  H{\mathstrut}^*(\zk);t\bigr)},
\]
where $\varSigma^{-1}$ denotes the desuspension of a graded
$\k$-module.
\end{itemize}
\end{theorem}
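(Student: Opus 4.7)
The plan is to prove the three conditions equivalent via the cycle $(a) \Rightarrow (b) \Rightarrow (c) \Rightarrow (a)$, using throughout the bar and cobar spectral sequences already deployed in the proof of Proposition~\ref{trivmult}. The implication $(a) \Rightarrow (b)$ is exactly that proposition, applied with the simply-connected space $X=\zk$.

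For $(b) \Rightarrow (c)$, I would invoke the Berglund--J\"ollenbeck theorem cited in the introduction of this section: for a face ring, triviality of the cup product in $H^*(\zk) \cong \Tor_{\k[v_1,\ldots,v_m]}(\k[K],\k)$ automatically forces all higher Massey products to vanish, so $\k[K]$ is Golod. Trivial cup product is equivalent to trivial coproduct on $\widetilde H_*(\zk)$, so the Adams cobar complex on $H_*(\zk)$ has zero differential and $E^{\mathrm{c}}_2 = T\langle\varSigma^{-1}\widetilde H_*(\zk)\rangle$ as a graded algebra. The standard interpretation of higher differentials in the Adams cobar spectral sequence in terms of Massey products then forces the sequence to collapse at $E_2$, and (c) follows from the resulting Poincar\'e series identity
\[
P\bigl(H_*(\Omega\zk);t\bigr) = P(E^{\mathrm{c}}_\infty;t) = P(E^{\mathrm{c}}_2;t) = P\bigl(T\langle\varSigma^{-1}\widetilde H^*(\zk)\rangle;t\bigr) = \frac{1}{1-P\bigl(\varSigma^{-1}\widetilde H^*(\zk);t\bigr)}.
\]

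For $(c) \Rightarrow (a)$, I would use the chain of cobar spectral sequence inequalities from the proof of Proposition~\ref{trivmult}:
\[
P\bigl(H_*(\Omega\zk);t\bigr) \le P(E^{\mathrm{c}}_2;t) \le P\bigl(T\langle\varSigma^{-1}\widetilde H^*(\zk)\rangle;t\bigr) = \frac{1}{1-P\bigl(\varSigma^{-1}\widetilde H^*(\zk);t\bigr)}.
\]
Condition (c) forces both inequalities to be equalities. The right equality says the cobar complex differential vanishes, so $E^{\mathrm{c}}_2 = T\langle\varSigma^{-1}\widetilde H_*(\zk)\rangle$ as a graded algebra; the left equality says the spectral sequence collapses. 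Strong multiplicative convergence then yields an isomorphism $\mathrm{gr}\,H_*(\Omega\zk) \cong T\langle\varSigma^{-1}\widetilde H_*(\zk)\rangle$ of graded algebras. Choosing lifts of these generators defines a map from a free associative algebra onto $H_*(\Omega\zk)$ which is an isomorphism on associated gradeds, hence an isomorphism, proving (a).

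The main obstacle will be the collapse argument in $(b) \Rightarrow (c)$, which rests on the technical but classical identification of higher differentials in the Adams cobar spectral sequence with Massey products. A cleaner alternative, bypassing this machinery, would be to exhibit a formality-type statement for the Koszul DGA $\Lambda[u_1,\ldots,u_m]\otimes\k[K]$ under the Golod hypothesis---since its cohomology is $H^*(\zk)$, formality together with trivial cup product would give the collapse directly.
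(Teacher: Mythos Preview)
Your cycle $(a)\Rightarrow(b)\Rightarrow(c)\Rightarrow(a)$ matches the paper's, and your arguments for $(a)\Rightarrow(b)$ and $(c)\Rightarrow(a)$ are essentially the paper's own (your $(c)\Rightarrow(a)$ is in fact spelled out more carefully than the paper's, which simply asserts that the equality of Poincar\'e series forces all cobar differentials to vanish and hence $H_*(\Omega\zk)$ to be free).

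The genuine difference is in $(b)\Rightarrow(c)$. You and the paper both invoke Berglund--J\"ollenbeck to pass from trivial multiplication to Golodness, but then diverge. You argue directly with the Adams cobar spectral sequence for $\zk$: trivial cup product kills the $d_1$, and triviality of Massey products kills the higher differentials, giving collapse and hence the Poincar\'e identity. The paper instead quotes the Gulliksen--Levin \emph{alternative definition} of Golod as the Poincar\'e series identity
\[
  P\bigl(\Ext_{\k[K]}(\k,\k);t\bigr)=\frac{(1+t)^m}
  {1-\sum_{i,j>0}\dim\Tor^{-i,2j}_{\k[v_1,\ldots,v_m]}(\k[K],\k)\,t^{-i+2j-1}},
\]
then translates both sides via Theorems~\ref{odjcoh} and~\ref{zkcoh} and divides by $P(H_*(T^m);t)=(1+t)^m$ using the splitting $\Omega\djs(K)\simeq\Omega\zk\times T^m$. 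This is cleaner: it black-boxes the Massey-product/differential correspondence inside the classical Gulliksen--Levin result, so no analysis of the cobar spectral sequence is needed at all for this implication. Your route is more self-contained (it never leaves $\zk$ for $\djs(K)$), but the step you flag as the ``main obstacle'' is exactly what the paper sidesteps by appealing to~\cite{gu-le69}.
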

\begin{proof}
The implication (a)$\Rightarrow$(b) holds by Proposition~\ref{trivmult}.

To prove the implication (b)$\Rightarrow$(c) we use the above
mentioned result~\cite[Th.~5.1]{be-jo07}, according to which if
the product in
$H^*(\zk)=\Tor_{\k[v_1,\ldots,v_m]}\bigl(\k[K],\k\bigr)$ is
trivial, then all higher Massey operations are also trivial, that
is, $\k[K]$ is Golod. By the alternative definition of the Golod
property~\cite{gu-le69}, $\k[K]$ is Golod if and only if the
following identity for the Poincar\'e series holds:
\[
  P\bigl(\Ext_{\k[K]}(\k,\k);t\bigr)=\frac{(1+t)^m}
  {1-\sum_{i,j>0}\dim
  \Tor^{-i,2j}_{\k[v_1,\ldots,v_m]}\bigl(\k[K],\k\bigr)t^{-i+2j-1}}.
\]
Using Theorems~\ref{odjcoh} and~\ref{zkcoh}, we rewrite this as
\[
  P\bigl(H_*( \Omega\djs(K));t\bigr)=\frac{P\bigl(H_*(T^m);t\bigr)}
  {1-P\bigl(\varSigma^{-1}\widetilde H{\mathstrut}^*(\zk);t\bigr)}.
\]
Since $ \Omega\djs(K)\simeq \Omega\zk\times T^m$, the above
identity is equivalent to that of~(c).

To prove the implication (c)$\Rightarrow$(a) we observe that
\[
  \frac1{1-P\bigl(\varSigma^{-1}
  \widetilde H{\mathstrut}^*(\zk);t\bigr)}
  =P\bigl(T\langle\varSigma^{-1}\widetilde H_*(\zk)\rangle;t\bigr),
\]
so the identity from (c) is equivalent to
$P\bigl(H_*( \Omega\zk);t\bigr)=P\bigl(T\langle\varSigma^{-1}\widetilde
H_*(\zk)\rangle\bigr)$. Hence, all differentials in the cobar
construction on~$H_*(\zk)$ are trivial, which implies that
$H_*( \Omega\zk)$ is a free associative algebra on
$\varSigma^{-1}\widetilde H_*(\zk)$.
\end{proof}

The conditions of Theorem~\ref{golodcond} do not guarantee that
$\zk$ is homotopy equivalent to a wedge of spheres. One reason is
that $H^*(\zk;\Z)$ may contain arbitrary torsion. This follows
easily from Theorem~\ref{zkcoh}: since $\widetilde H^*(K)$ is a
direct summand in $H^*(\zk)$, one may take $K$ to be a
triangulation of a space with torsion in cohomology. The simplest
example is the 6-vertex triangulation of $\R P^2$.

\begin{example}\label{rp2ex}
Let $K$ be the simplicial complex shown in Fig.~\ref{rptri},
where the vertices with the same labels are identified, and the
boundary edges are identified according to the orientation shown.
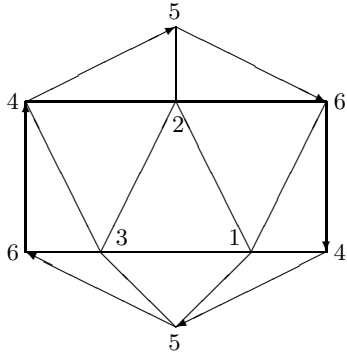
\begin{figure}[h]
  \begin{center}
  \begin{picture}(120,40)
  \put(59,-3){\small 5}
  \put(59,41){\small 5}
  \put(81,29){\small 6}
  \put(81,9){\small 4}
  \put(37.5,29){\small 4}
  \put(37.5,9){\small 6}
  \put(59.5,26){\small 2}
  \put(52,11){\small 3}
  \put(67,11){\small 1}
  \put(60,0){\vector(-2,1){20}}
  \put(40,10){\vector(0,1){20}}
  \put(40,30){\vector(2,1){20}}
  \put(60,40){\vector(2,-1){20}}
  \put(80,30){\vector(0,-1){20}}
  \put(80,10){\vector(-2,-1){20}}
  \put(40,10){\line(1,0){40}}
  \put(40,30){\line(1,0){40}}
  \put(60,0){\line(-1,1){10}}
  \put(60,0){\line(1,1){10}}
  \put(50,10){\line(-1,2){10}}
  \put(50,10){\line(1,2){10}}
  \put(70,10){\line(-1,2){10}}
  \put(70,10){\line(1,2){10}}
  \put(60,30){\line(0,1){10}}
  \end{picture}
  \end{center}
  \caption{6-vertex triangulation of $\R P^2$.}
  \label{rptri}
\end{figure}
A calculation using Theorem~\ref{zkcoh} shows that the nontrivial
cohomology groups of $\zk$ are given by
\[
  H^0=\Z,\quad H^5=\Z^{10},\quad H^6=\Z^{15},\quad H^7=\Z^6,
  \quad H^9=\Z/2.
\]
Therefore, all products and Massey products vanish for dimensional
reasons, so $K$ is Golod (over any field). Nevertheless, $\zk$
is not homotopy equivalent to a wedge of spheres because of the
torsion. In particular, in this example we have
\begin{equation}
  \label{rp2example}
  \zk\simeq (S^5)^{\vee 10}\vee(S^6)^{\vee 15}\vee(S^7)^{\vee 6}
  \vee\varSigma^7\R P^2
\end{equation}
where $X^{\vee k}$ denotes the $k$-fold wedge of~$X$. For, if we
regard $\zk$ as a $CW$-complex built up by attaching $k$-cells to
the $(k-1)$-skeleton for $6\le k\le 9$, then the attaching maps
are all in the stable range. But stably these attaching maps are
all null homotopic since, by~\cite{b-b-c-g10}, the homotopy
equivalence in~(\ref{rp2example}) holds after one suspension.
Therefore the attaching maps are null homotopic, and
so~(\ref{rp2example}) holds without having to suspend.
\end{example}

\begin{question}\label{GcoHs}
Assume that $H^*(\zk)$ has trivial multiplication, so that $K$
is Golod, over any field. Is it true that $\zk$ is a co-$H$-space,
or even a suspension, as in all known examples?
\end{question}

Denote by $K_{\widehat i}$ the restriction of $K$ to the set of vertices $[m]\setminus\{i\}$, that is,
%Taras
$K_{\widehat i}=\{J\in K\ |\ i\notin J\}$.
%$K_{\widehat i}=\{\tau\in K\ |\ i\notin\tau\}.$
 It follows from the description of the product in
$H^*(\zk)$ in Theorem~\ref{zkcoh} that if $K$ is Golod, then
$K_{\widehat i}$ is also Golod. Following~\cite{be-jo07}, we
refer to $K$ as a \emph{minimally non-Golod complex} if $K$ is
not Golod, but $K_{\widehat i}$ is Golod for each~$i$.

The condition for $K$ to be minimally non-Golod is an ``algebraic
approximation" of the topological condition for $\zk$ to be
homeomorphic to a connected sum of sphere products, with two
spheres in each product. In what follows, whenever we say that
$\zk$ is a connected sum of sphere products, we mean that each
summand is a product of exactly two spheres. (In fact, there is no known
example of $\zk$ which is homeomorphic to a nontrivial connected
sum of sphere products with more than two spheres in at least one
product.)

To justify the term ``algebraic approximation'', the following question
needs to be positively answered.

\begin{question}\label{minnG}
Is it true that if $\zk$ is a connected sum of sphere products,
then $K$ is minimally non-Golod?
\end{question}

Examples of minimally non-Golod complexes include the boundary
complexes of polygons and, more generally, stacked polytopes
different from simplices~\cite[Th.~6.19]{be-jo07}. For all these
cases it is known that $\zk$ is homeomorphic to a connected sum of
sphere products, due to a result of McGavran
(cf.~\cite[Th.~6.3]{bo-me06}, see also Section~\ref{pentex} below).

\section{The case of a flag complex}\label{flagK}

A \emph{missing face} (or a \emph{minimal non-face}) of $K$ is a
subset $I\subset[m]$ such that $I\notin K$, but every proper
subset of $I$ is a simplex of~$K$. A simplicial complex $K$ is
called a \emph{flag complex} if each of its missing faces has two
vertices. Equivalently, $K$ is flag if any set of vertices of $K$
which are pairwise connected by edges spans a simplex.

In the case of flag complexes $K$ we shall show that the ``algebraic
approximations'' from the previous section are precise criterions
for the appropriate topological properties: $\zk$ is a wedge of
spheres precisely when $K$ is Golod, and $\zk$ is a connected
sum of sphere products if and only if $K$ is minimally
non-Golod.

There is the following description of $H_*(
\Omega\djs(K))=\Ext_{\k[K]}(\k,\k)$ for flag~$K$.

\begin{theorem}[{\cite[Th.~9.3]{pa-ra08}}]\label{flagldjs}
For any flag complex $K$, there is an isomorphism
\begin{equation}\label{flagalg}
  H_*\bigl( \Omega\djs(K);\k\bigr)\cong
  T\langle u_1,\ldots,u_m\rangle\big/
  \bigl(u_i^2=0,\; u_iu_j+u_ju_i=0\text{ for }\{i,j\}\in K\bigr)
\end{equation}
where $\k$ is a field and $T\langle u_1,\ldots,u_m\rangle$ is the
free associative algebra on $m$ generators of degree~1.
\end{theorem}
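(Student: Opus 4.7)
The plan is to reduce to an algebraic computation via Theorem~\ref{odjcoh} and then exploit the Koszul property of flag Stanley--Reisner rings. By Theorem~\ref{odjcoh}, $H_*(\Omega\djs(K);\k)\cong\Ext_{\k[K]}(\k,\k)$, so it suffices to identify this Yoneda algebra with the right-hand side of~\eqref{flagalg}.

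When $K$ is flag, the Stanley--Reisner ideal is generated by the degree-two monomials $v_iv_j$ for the missing edges $\{i,j\}\notin K$, so $\k[K]$ is a quadratic algebra. Let $V$ be the $\k$-span of $v_1,\ldots,v_m$; then $\k[K]\cong T(V)/(R)$ with $R\subset V\otimes V$ spanned by the commutators $v_i\otimes v_j-v_j\otimes v_i$ (for $i<j$) together with $v_i\otimes v_j$ (for $\{i,j\}\notin K$, $i\neq j$). A direct computation of the orthogonal complement $R^\perp\subset V^*\otimes V^*$ under the standard pairing shows that $R^\perp$ is spanned by $u_i\otimes u_i$ for all $i$ and by $u_i\otimes u_j+u_j\otimes u_i$ for $\{i,j\}\in K$, $i<j$, where $u_i$ is the basis of $V^*$ dual to $v_i$. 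Consequently the quadratic dual $\k[K]^!=T(V^*)/(R^\perp)$ is precisely the algebra on the right-hand side of~\eqref{flagalg}.

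To finish, I would invoke Fr\"oberg's theorem that the Stanley--Reisner ring of a flag complex is Koszul. For any Koszul quadratic algebra $A$, the Yoneda algebra $\Ext_A(\k,\k)$ is canonically isomorphic to the quadratic dual $A^!$, and combining this with the preceding computation completes the proof. The main obstacle is the Koszulness step; if a self-contained argument is required, one writes down the expected Koszul resolution of $\k$ over $\k[K]$ whose $n$-th term is a free $\k[K]$-module with basis indexed by the degree-$n$ part of $\k[K]^!$, and verifies its acyclicity by a combinatorial argument exploiting the fact that the square-free monomials $v_{i_1}\cdots v_{i_k}$ corresponding to simplices $\{i_1,\ldots,i_k\}\in K$ form a $\k$-basis of $\k[K]$. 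One also checks that this resolution is compatible with the Yoneda product, so that the resulting isomorphism is one of algebras and not merely of $\k$-modules.
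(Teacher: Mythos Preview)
Your proposal is correct and follows essentially the same route as the paper, which does not give a full proof but only sketches the argument in the remark following the theorem: reduce to $\Ext_{\k[K]}(\k,\k)$ via Theorem~\ref{odjcoh} (equivalently, the Adams cobar construction together with the formality of $\djs(K)$), then invoke Fr\"oberg's result~\cite{frob75} that flag Stanley--Reisner rings are Koszul, so that the Yoneda algebra coincides with the quadratic dual. Your explicit computation of $\k[K]^!$ is correct and simply fills in the detail the paper leaves to the cited reference.
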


\begin{remark}
The theorem above is formulated in~\cite{pa-ra08} with
$\Q$-coefficients, but the argument (using the Adams cobar
construction and a result of Fr\"oberg~\cite{frob75} on
\emph{quadratic duality}) works for arbitrary field.

Algebra~\eqref{flagalg} may be viewed as a colimit (in the
category of noncommutative associative algebras) of a diagram of algebras over
the face category of~$K$, which assigns to each face $I\in K$ the
exterior algebra $\Lambda[u_i\colon i\in I]$. Another way to see
this algebra is to assign a generator $u_i$ satisfying $u_i^2=0$
to each vertex of~$K$, and think of each edge of $K$ as a
commutativity relation between the corresponding~$u_i$'s. The
resulting algebra is determined by the 1-skeleton (graph) of~$K$,
which is not surprising since~$K$ is flag. In the non-flag case
higher brackets appear, corresponding to higher Samelson products
in $ \Omega\djs(K)$, and the colimit above has to be replaced by a
homotopy colimit, see~\cite[\S8]{pa-ra08} for the details.

Algebra~\eqref{flagalg} is also known as the \emph{graph product
algebra} corresponding to the 1-skeleton of~$K$. Its
group-theoretic analogues are \emph{right-angled Artin} and
\emph{Coxeter groups}; in fact the polyhedral products of the form
$(\R P^\infty,\ast)^K$ and $(S^1,\ast)^K$ respectively are the
classifying spaces of these groups in the flag case
(cf.~\cite[\S4]{p-r-v04}).
\end{remark}

The \emph{$f$-vector} of $K$ is given by $\mb
f(K)=(f_0,\ldots,f_{n-1})$ where $f_i$ is the number of
$i$-dimensional faces and $n-1=\dim K$. The $h$-vector $\mb
h(K)=(h_0,h_1,\ldots,h_n)$ is defined from the relation
\[
  h_0t^n+h_1t^{n-1}+\cdots+h_n=(t-1)^n+f_0(t-1)^{n-1}+\cdots+f_{n-1}.
\]
The $h$-vector is symmetric for sphere triangulations~$K$; the
equations $h_i=h_{n-i}$ are known as the \emph{Dehn--Sommerville
relations}.

As another application of quadratic duality, the Poincar\'e series
of $H_*( \Omega\zk)$ can be calculated explicitly in terms of the
face numbers of~$K$ in the flag case.

\begin{proposition}[{\cite[Prop.~9.5]{pa-ra08}}]\label{poinser}
For any flag complex $K$, we have
\[
  P\bigl( H_*( \Omega\zk);t \bigr)\;=\;
  \frac1{(1+t)^{m-n}(1-h_1t+\cdots+(-1)^nh_nt^n)}\;.
\]
\end{proposition}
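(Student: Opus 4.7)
The plan is to combine the explicit description of $A:=H_*(\Omega\djs(K))$ from Theorem~\ref{flagldjs}, Fr\"oberg's Koszul duality theorem for flag face rings, and the loop space splitting $\Omega\djs(K)\simeq\Omega\zk\times T^m$. Once $A$ is identified as the Koszul dual of $\k[K]$, the Poincar\'e series of both $A$ and $H_*(\Omega\zk)$ are determined purely formally.

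First, I would invoke Fr\"oberg's theorem~\cite{frob75}: when $K$ is flag, the face ring $\k[K]$, regarded as a quadratic algebra with $\deg v_i=1$, is Koszul, and its quadratic dual $\k[K]^!$ is precisely the graph product algebra appearing in Theorem~\ref{flagldjs}. In other words, $\k[K]^!=A$. The Koszul duality identity for Hilbert series, $P(R;t)P(R^!;-t)=1$, then gives
\[
  P(A;t)=\frac{1}{P(\k[K];-t)}.
\]

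Second, I would plug in the standard formula $P(\k[K];t)=(h_0+h_1t+\cdots+h_nt^n)/(1-t)^n$ for the Hilbert series of the face ring (with $\deg v_i=1$) to obtain
\[
  P(A;t)=\frac{(1+t)^n}{1-h_1t+h_2t^2-\cdots+(-1)^nh_nt^n}.
\]
The loop space splitting $\Omega\djs(K)\simeq\Omega\zk\times T^m$ combined with K\"unneth then yields $P(A;t)=(1+t)^m\cdot P(H_*(\Omega\zk);t)$, and dividing through produces the claimed formula.

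The main point requiring care is that the paper grades $\k[K]$ by $\deg v_i=2$, whereas Koszul duality is cleanest with $\deg v_i=1$. One must check that the Koszul dual, with its natural grading, has generators in degree one matching the topological grading of $H_*(\Omega\djs(K))$ given by Theorem~\ref{flagldjs}. This compatibility is exactly what makes the duality identity track the topological Poincar\'e series of $\Omega\djs(K)$, and it is the only subtle step; the remainder is a routine manipulation of rational functions.
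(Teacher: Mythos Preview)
Your proposal is correct and matches the approach the paper indicates: the paper does not give a proof of this proposition but cites \cite[Prop.~9.5]{pa-ra08} and prefaces it with ``As another application of quadratic duality,'' pointing to Fr\"oberg's Koszul duality result~\cite{frob75} exactly as you do. Your derivation---identifying $H_*(\Omega\djs(K))$ with the Koszul dual $\k[K]^!$, applying the duality identity for Hilbert series, inserting the $h$-vector formula for $P(\k[K];t)$, and then dividing out $(1+t)^m$ via the splitting $\Omega\djs(K)\simeq\Omega\zk\times T^m$---is precisely the intended argument, and your remark about reconciling the $\deg v_i=2$ convention with the degree-one grading needed for Koszul duality is the right caveat.
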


We now go further by identifying a minimal set of
multiplicative generators in $H_*( \Omega\zk)$ as a specific set
of iterated commutators of the~$u_i$.

\begin{theorem}\label{multgen}
Assume that $K$ is flag and $\k$ is a field. The algebra $H_*(
\Omega\zk;\k)$, viewed as the commutator
subalgebra~\eqref{flagalg} via exact sequence~\eqref{commutator},
is multiplicatively generated by $\sum_{I\subset[m]}\dim\widetilde
H^0(K_I)$ iterated commutators of the form
\[
  [u_j,u_i],\quad [u_{k_1},[u_j,u_i]],\quad\ldots,\quad
  [u_{k_1},[u_{k_2},\cdots[u_{k_{m-2}},[u_j,u_i]]\cdots]]
\]
where $k_1<k_2<\cdots<k_p<j>i$, $k_s\ne i$ for any~$s$, and $i$ is
the smallest vertex in a connected component not containing~$j$ of
the subcomplex $K_{\{k_1,\ldots,k_p,j,i\}}$. Furthermore, this
multiplicative generating set is minimal, that is, the commutators
above form a basis in the submodule of indecomposables
in~$H_*( \Omega\zk)$.
\end{theorem}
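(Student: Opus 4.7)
The plan is to work inside the graph product algebra $A := H_*(\Omega\djs(K))$ given by Theorem~\ref{flagldjs}, and to analyse the commutator subalgebra $B := H_*(\Omega\zk)$ identified via the exact sequence~\eqref{commutator} as the kernel of the abelianisation $A \to \Lambda[u_1,\ldots,u_m]$. Each listed iterated commutator is a sum of monomials mapping to zero in the exterior algebra, so it lies in $B$; what must be shown is that these commutators generate $B$ as a subalgebra of $A$, and that they are linearly independent modulo the decomposables $B_+ \cdot B_+$.

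For generation, I would introduce the order $u_1 < \cdots < u_m$ on generators and the induced degree-lexicographic order on monomials. The quadratic relations of $A$ yield a confluent rewriting system, producing a PBW-type basis of reduced monomials (those admitting no simplification via $u_i^2=0$ or $u_iu_j = -u_ju_i$ for $\{i,j\}\in K$). Expanding $[u_{k_1},[u_{k_2},\ldots,[u_j,u_i]\ldots]]$ via the graded Jacobi identity shows its leading monomial is $u_{k_1}u_{k_2}\cdots u_{k_p}u_ju_i$, and the conditions ($k_1<\cdots<k_p<j>i$, $k_s\neq i$, and $i$ the smallest vertex in a component of $K_{\{k_1,\ldots,k_p,j,i\}}$ not containing $j$) are designed precisely to make this leading monomial reduced. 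An induction on length then shows that any reduced monomial whose image in $\Lambda[u_1,\ldots,u_m]$ vanishes can be written, modulo $B_+ \cdot B_+$, as a linear combination of the specified commutators.

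For minimality, distinct admissible commutators have distinct reduced leading monomials, so they represent linearly independent classes in $Q(B) := B_+/(B_+\cdot B_+)$. Counting them by length: a commutator of length $p+2$ with index set $I=\{k_1,\ldots,k_p,i,j\}$ is determined by a connected component $C$ of $K_I$ not containing $\max I = j$, together with $i = \min C$, yielding $c(K_I)-1 = \dim\widetilde H^0(K_I)$ admissible commutators per subset $I$, and $\sum_I \dim\widetilde H^0(K_I)$ in total. To confirm this equals $\dim Q(B)$, I would exploit the $B$-module isomorphism $A \cong B\otimes \Lambda[u_1,\ldots,u_m]$ coming from~\eqref{commutator} together with the Koszulness of $A$ (cf.\ the remark after Theorem~\ref{flagldjs}) and Proposition~\ref{poinser}, forcing the Poincar\'e series of $Q(B)$ to equal $\sum_I \dim\widetilde H^0(K_I)\,t^{|I|}$.

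The hardest step will be the normal-form analysis in the generation step: one must verify that every reduced monomial of $B$ is captured, modulo decomposables, by exactly one admissible commutator, with no missing classes and no cancellations among leading terms. This reduces to a combinatorial bijection between reduced monomials in $B$ and the indexing data $(I,C)$. A secondary but essential point is computing $\dim Q(B)$ without assuming $B$ is free (which by Theorem~\ref{golodcond} would require $K$ to be Golod, not true in general), so the argument must exploit the finer structure of the extension $B \hookrightarrow A \twoheadrightarrow \Lambda[u_1,\ldots,u_m]$ rather than a naive Poincar\'e series inversion.
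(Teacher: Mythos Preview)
There is a conceptual gap in your generation argument. The subalgebra $B=H_*(\Omega\zk)$ sits inside $A$ as the Hopf-algebra kernel of $A\to\Lambda$ (reflecting the splitting $\Omega\djs(K)\simeq\Omega\zk\times T^m$), not as the ordinary algebra kernel. These differ: when $K$ consists of two disjoint vertices, the reduced monomial $u_1u_2u_1$ lies in $\ker(A\to\Lambda)$ but not in $B$, since here $\zk\simeq S^3$, so $B\cong\k[x]$ with $|x|=2$ and $B$ vanishes in odd degrees. Thus your proposed induction---expressing every reduced monomial with vanishing image in $\Lambda$ as a combination of admissible commutators modulo $B_+^2$---targets the wrong set; a PBW approach would first require an intrinsic linear description of $B$ inside $A$, which is more delicate than picking out monomials killed by the quotient map. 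A related inaccuracy: the leading monomial $u_{k_1}\cdots u_{k_p}u_ju_i$ is reduced precisely when $\{i,j\}\notin K$, strictly weaker than the connected-component condition, so the admissible commutators do not exhaust those with reduced leading monomial and your ``designed precisely'' claim overstates the match.

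The paper sidesteps these issues by first treating the case $K=\overline K$ of $m$ disjoint points (Lemma~\ref{CNzk}), invoking the Cohen--Neisendorfer generating set for the commutator subalgebra of a free algebra and then using the topological input $\barzk\simeq\bigvee_{\ell}(S^{\ell+1})^{\vee(\ell-1)\binom m\ell}$ to conclude freeness and minimality of the reduced generator set. For general flag $K$ it then argues via the Jacobi identity that whenever $i$ and $j$ lie in the same component of $K_I$ the corresponding commutator is decomposable; minimality is established not by a Poincar\'e-series count (which, as you correctly anticipate, is problematic when $B$ is not free) but by projecting onto $\djs(K_{\widehat I})$ with $K_{\widehat I}$ chosen so that the vertex $i$ becomes isolated, reducing independence to that of Whitehead products in a wedge $\mathbb{C}P^\infty\vee\djs(K_{\widehat I}\setminus\{i\})$.
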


\begin{remark}
To help clarify the statement of Theorem~\ref{multgen}, it is useful
to consider which brackets $[u_{j},u_{i}]$ are in the list of multiplicative
generators for $H_{\ast}(\Omega\zk;\k)$. If $\{j,i\}\in K$ then
$i$ and $j$ are in the same connected component of the subcomplex
$K_{\{j,i\}}$, so $[u_{j},u_{i}]$ is not a multiplicative generator. On
the other hand, if $\{j,i\}\notin K$ then the subcomplex $K_{\{j,i\}}$
consists of the two distinct points $i$ and $j$, and $i$ is the smallest
vertex in its connected component of $K_{\{j,i\}}$ which does not
contain~$j$, so $[u_{j},u_{i}]$ is a multiplicative generator. In
Section~\ref{pentex} the example where $K$ is a pentagon is worked
out in detail, and in particular, a complete list of multiplicative
generators for $H_{\ast}(\Omega\zk;\k)$ is given.
\end{remark}

\begin{proof}
We observe that, for a given $I=\{k_1,\ldots,k_p,j,i\}$, the
number of the commutators containing all
$u_{k_1},\ldots,u_{k_p},u_j,u_i$ in the set above is equal to
$\dim\widetilde H^0(K_I)$ (one less the number of connected
components in $K_I$), so there are indeed
$\sum_{I\subset[m]}\dim\widetilde H^0(K_I)$ commutators in total.

We first prove a particular case of the statement, corresponding
to $K$ consisting of $m$ disjoint points. This result may be of
independent algebraic interest, as it is an analogue of the
description of a basis in the commutator subalgebra of a free
algebra, given by Cohen and Neisendorfer~\cite{co-ne84}.

\begin{lemma}
\label{CNzk}
Let $A$ be the commutator subalgebra of $T\langle
u_1,\ldots,u_m\rangle/(u_i^2=0)$, that is, the algebra defined by
the exact sequence
\[
  1\longrightarrow A\longrightarrow
  T\langle u_1,\ldots,u_m\rangle/
  (u_i^2=0)\longrightarrow\Lambda[u_1,\ldots,u_m]
  \longrightarrow1
\]
where $\deg u_i=1$. Then $A$ is a free associative algebra
minimally generated by the iterated commutators of the form
\[
  [u_j,u_i],\quad [u_{k_1},[u_j,u_i]],\quad\ldots,\quad
  [u_{k_1},[u_{k_2},\cdots[u_{k_{m-2}},[u_j,u_i]]\cdots]]
\]
where $k_1<k_2<\cdots<k_p<j>i$ and $k_s\ne i$ for any~$s$. Here,
the number of commutators of length $\ell$ is equal to
$(\ell-1)\bin m\ell$.
\end{lemma}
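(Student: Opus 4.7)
The plan is to combine the algebraic presentation from Theorem~\ref{flagldjs} with the wedge-of-spheres decomposition from Theorem~\ref{disjointpoints}, and then exhibit the specified iterated commutators as a minimal generating set via a leading monomial argument.

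First, I would apply Theorem~\ref{flagldjs} to the flag complex $K$ consisting of $m$ disjoint points. Since $K$ has no edges, there are no commutation relations, giving $H_*(\Omega\djs(K);\k) = T\langle u_1,\ldots,u_m\rangle/(u_i^2)$. The exact sequence~\eqref{commutator} then identifies $A$ with $H_*(\Omega\zk;\k)$. By Theorem~\ref{disjointpoints}, $\zk\simeq\bigvee_{\ell=2}^m (S^{\ell+1})^{\vee(\ell-1)\binom{m}{\ell}}$, so the Bott--Samelson theorem yields a free associative algebra structure on $A$ with exactly $(\ell-1)\binom{m}{\ell}$ free generators in each degree $\ell$. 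This establishes the freeness claim and pins down the Poincar\'e series $P(A;t)=1/\bigl((1+t)^{m-1}(1-(m-1)t)\bigr)$.

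Next I would count the proposed commutators and check that they lie in $A$. For each subset $I\subset[m]$ with $|I|=\ell$, the element $j=\max I$ is forced, $i$ ranges over the other $\ell-1$ elements of $I$, and the remaining elements form $\{k_1<\cdots<k_{\ell-2}\}$, giving $(\ell-1)\binom{m}{\ell}$ commutators of degree $\ell$, exactly matching the number of free generators. Each of them lies in $A$ because commutators vanish in the exterior algebra quotient.

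To show these commutators form a minimal generating set, I would use a leading monomial argument. Order the monomial basis of $T\langle u_1,\ldots,u_m\rangle/(u_i^2)$ lexicographically with $u_1<u_2<\cdots<u_m$. An induction on the number $p$ of outer brackets shows that the leading monomial of $[u_{k_1},[u_{k_2},\cdots[u_{k_p},[u_j,u_i]]\cdots]]$ is $\pm u_j u_i u_{k_p} u_{k_{p-1}}\cdots u_{k_1}$: the base $[u_j,u_i]=u_j u_i-u_i u_j$ has leading monomial $u_j u_i$ since $j>i$, and inductively the leading letter $u_j$ of the inner commutator $c$ strictly exceeds $u_{k_1}$, so the leading term of $[u_{k_1},c]=u_{k_1}c-cu_{k_1}$ is $-\mathrm{lead}(c)\cdot u_{k_1}$. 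Since the assignment $(I,i)\mapsto u_j u_i u_{k_p}\cdots u_{k_1}$ is injective (recover $j$ from the first letter, $i$ from the second, and the $k_s$'s as the strictly decreasing tail), the commutators have distinct leading monomials and are linearly independent in $A$.

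The main obstacle is converting linear independence in $A$ into the conclusion that these commutators freely generate $A$. I would proceed via Poincar\'e series: the algebra map $\varphi:T\langle c_{I,i}\rangle\to A$ sending formal symbols to the corresponding commutators is a map between free associative algebras with matching Poincar\'e series (the source series, computed from the free algebra formula on the given degrees, coincides with $P(A;t)$ by the binomial identity $1-\sum_{\ell\ge2}(\ell-1)\binom{m}{\ell}t^\ell=(1+t)^{m-1}(1-(m-1)t)$). Once $\varphi$ is shown to be surjective, it must therefore be an isomorphism, which simultaneously gives algebraic independence and minimality of the generating set. Surjectivity can be proven by an inductive leading-monomial reduction, or more geometrically by identifying each sphere in the wedge decomposition of $\zk$ with an iterated higher Whitehead product of the natural inclusions $S^1\hookrightarrow\zk$ at the vertices of $I$, whose Samelson adjoint in $H_*(\Omega\zk)$ is precisely $c_{I,i}$.
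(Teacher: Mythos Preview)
Your overall architecture matches the paper's: both identify $A$ with $H_*(\Omega\zk)$ for $K$ a set of $m$ points, and both use Theorem~\ref{disjointpoints} together with Bott--Samelson to conclude that $A$ is free with exactly $(\ell-1)\binom{m}{\ell}$ generators in degree~$\ell$. The divergence is in how one shows that the \emph{specified} iterated commutators form a generating set.

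There is a genuine gap in your argument at exactly this point. Your leading-monomial computation shows that the commutators $c_{I,i}$ are linearly independent in $A$, but this is strictly weaker than linear independence in the indecomposables $Q(A)$. For $\ell\ge4$ the decomposable part $A_+ \cdot A_+$ is nonzero in degree~$\ell$, so having $(\ell-1)\binom{m}{\ell}$ linearly independent elements of $A_\ell$ does not force them to span $Q(A)_\ell$; a priori some $c_{I,i}$ could be decomposable and the subalgebra they generate could miss a free generator entirely. Your Poincar\'e-series reduction to surjectivity of $\varphi\colon T\langle c_{I,i}\rangle\to A$ is correct, but neither of the two routes you sketch for surjectivity is actually carried out. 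The ``inductive leading-monomial reduction'' would require showing that products of the $c_{I,i}$ have distinct leading monomials with no accidental cancellation from the relations $u_i^2=0$ at the concatenation points, which is not automatic. The geometric route via Whitehead products is circular as stated: identifying each sphere in the wedge with a specific iterated Whitehead product is exactly what one deduces \emph{from} this lemma (cf.\ Lemma~\ref{Whgolod}), not an independent input.

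The paper closes this gap differently. It invokes the Cohen--Neisendorfer theorem, which says that the commutator subalgebra $B$ of the \emph{free} algebra $T\langle u_1,\ldots,u_m\rangle$ is itself free on an explicit set $T$ of iterated commutators with index conditions $k_1<\cdots<k_p<j\ge i$ (so repetitions are allowed). Since $A$ is a quotient of $B/(u_i^2=0)$, the set $T$ certainly generates~$A$. An induction using the Jacobi identity then shows that every commutator in $T$ containing a repeated letter becomes decomposable modulo $u_i^2=0$, leaving precisely your set $S$ as a generating set. Minimality and freeness then follow from the topological count, as you already have. The missing ingredient in your proposal is thus a replacement for the Cohen--Neisendorfer input: either cite it and perform the Jacobi reduction, or upgrade your leading-monomial argument from linear independence of the $c_{I,i}$ to injectivity of $\varphi$ on all of $T\langle c_{I,i}\rangle$.
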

\begin{proof}
Let $S$ be the set of commutators in the statement of the lemma.
Let $B$ denote the commutator algebra of a free algebra on $m$
generators, that is, the algebra kernel of the map $T\langle
u_1,\ldots,u_m\rangle\to\Lambda[u_1,\ldots,u_m]$.
By~\cite{co-ne84}, $B$ is a free algebra generated the commutators
of the same form
$[u_{k_1},[u_{k_2},\cdots[u_{k_p},[u_j,u_i]]\cdots]]$, but with
the conditions $k_1<k_2<\cdots<k_p<j\ge i$ only. We therefore get
a larger set $T$ of commutators, in which $u_k$ may repeat.
However, note that the inequalities on the indices imply that if
$u_{k}$ repeats within a specified commutator, it does so only
once. We have $S\subseteq T$ and wish to show that any commutator
in $T-S$ is excluded from the multiplicative generating set of the
quotient $T\langle u_1,\ldots,u_m\rangle/(u_i^2=0)$. To see this,
induct on the length of the commutators, beginning with
$[u_{k},u_{k}]=2u_{k}^{2}=0$. Suppose the commutators of length~$<
n$ in $T$ have had any commutator with a repeating $u_{k}$
excluded from the generating set of $T\langle
u_1,\ldots,u_m\rangle/(u_i^2=0)$. Choose a commutator of length
$n$ with some $u_{k}$ repeating. Observe that it suffices to
consider commutators of the form
$[u_{k},[u_{k_2},\cdots[u_{k_p},[u_j,u_k]]\cdots]]$, which we
write as $[u_{k},[u_{k_{2}},c]]$ for $c=[u_{k_{3}},\cdots[
u_{k_{p}},[u_{j},u_{k}]]\cdots ]$. By the Jacobi identity,
$[u_{k},[u_{k_{2}},c]]=\pm [c,[u_{k},u_{k_{2}}]]\pm
[u_{k_{2}},[c,u_{k}]]$. Rewriting to conform to the restrictions
on the indices in the basis for $B$, we obtain
$[u_{k},[u_{k_{2}},c]]=\pm [c,[u_{k_{2}},u_{k}]]\pm
[u_{k_{2}},[u_{k},c]]$. The first term on the right is a
commutator of two elements of lower length in $S$. The second term
on the right has $[u_{k},c]$ excluded from the multiplicative
generating set of $T\langle u_1,\ldots,u_m\rangle/(u_i^2=0)$ by
inductive hypothesis, since $u_{k}$ appears in $c$. Therefore
$[u_{k},[u_{k_{2}},c]]$ is not a multiplicative generator of
$T\langle u_1,\ldots,u_m\rangle/(u_i^2=0)$.

Now observe that the set of commutators $S$ generates $A$
multiplicatively, since $A$ is a quotient of~$B/(u_i^2=0)$. To
show that $A$ is a free algebra, and the given generator set is
minimal, we use a topological argument. We have that $A=H_*(
\Omega\zk)$ where $K$ is a disjoint union of~$m$ points. By
Theorem~\ref{disjointpoints}, $\zk$ is homotopy equivalent to the
wedge of spheres $\bigvee_{\ell=2}^m(S^{\ell+1})^{\vee(\ell-1)\bin
m\ell}$. The Bott--Samelson Theorem implies that $A=H_*(
\Omega\zk)$ is a free algebra, and the number of generators in
each degree~$\ell$ agrees with the number of given commutators of
length~$\ell$.
\end{proof}

To complete the proof of Theorem~\ref{multgen} we must deal with
how the remaining relations in~(\ref{flagalg}), those of the form
$u_{i}u_{j}+u_{j}u_{i}=0$ if $\{i,j\}\in K$, affect the iterated
commutators listed in Lemma~\ref{CNzk}. Note that
$u_{i}u_{j}+u_{j}u_{i}=[u_{i},u_{j}]$ and that no~$u_{k}$ repeats
in any of the iterated commutators listed in Lemma~\ref{CNzk}.

Assume that $i,i^{\prime}$ are vertices in the same connected
component of $K$. Then there are vertices $i_{1}=i,
i_{2},\ldots,i_{k-1},i_{k}=i^{\prime}$ for some $k$ with the
property that the edges $\{i_{1},i_{2}\},\ldots,\{i_{k-1},i_{k}\}$
are all in~$K$. Arguing inductively as in the proof of
Lemma~\ref{CNzk}, the Jacobi identity implies that any iterated
commutator of length $l$ involving all $u_{i_1},\ldots,u_{i_k}$
can be rewritten as a sum of iterated commutators formed from
iterated commutators of lengths~$<l$. In particular, if $K$ is
connected (with $m$ vertices) then any iterated commutator of
length $m$ is zero modulo commutators of lesser length.

Continuing, suppose that we are given an index set
$I=\{k_{1},\ldots,k_{p},j,i\}$ with $k_{1}<k_{2}<\cdots
<k_{p}<j>i$ and $k_{s}\neq i$ for any $s$. Consider iterated
commutators of length $p+2$ involving one occurrence of $u_{k}$
for each $k\in I$. One example is
$[u_{k_1},[u_{k_2},\cdots[u_{k_{p}},[u_j,u_i]]\cdots]]$. Observe
that the restrictions on the order of the indices imply that the
only other examples occur by interchanging $u_{i}$ and $u_{k_{l}}$
provided $k_{l-1}<i<k_{l+1}$. Now if $i,j$ are in the same
connected component of $K_{I}$ then
$[u_{k_1},[u_{k_2},\cdots[u_{k_{p}},[u_j,u_i]]\cdots]]=0$ modulo
iterated commutators of lesser length, by the argument in the
previous paragraph applied to $K_I$. So to obtain nontrivial
commutators we require that $i,j$ appear in different components.
Also, if $\{k_{l_{1}},\ldots,k_{l_{r}}\}$ is the subset of
$\{k_{1},\ldots,k_{p}\}$ which lie in the same connected component
of $K_{I}$ as~$i$, then the iterated commutators
$[u_{k_1},[u_{k_2},\cdots,u_{k_{l_t-1}},[u_{i},[u_{k_{l_t+1}},
\cdots[u_{k_{p}},[u_j,u_{k_{l_{t}}}]]\cdots]]$ and
$[u_{k_1},[u_{k_2},\cdots[u_{k_{p}},[u_j,u_i]]\cdots]]$ can be
identified modulo iterated commutators of lesser lengths. So to
enumerate the one independent iterated commutator, we use the
convention of writing
$[u_{k_1},[u_{k_2},\cdots[u_{k_{p}},[u_j,u_i]]\cdots]]$ where $i$
is the smallest vertex in its connected component within $K_{I}$.
This leaves us with precisely the set of iterated commutators in
the statement of the theorem.

At this point, we have shown that the set of iterated commutators
in the statement of the theorem multiplicatively generates
$H_{\ast}( \Omega\zk)$. It remains to show that this is a minimal
generating set. To see this, it suffices to show that if
$I=\{k_{1},\ldots,k_{p},j,i\}$ where $k_{1}<\cdots <k_{p}<j>i$,
then the remaining iterated commutators on this index set are
algebraically independent. Let $\{k_{l_{1}},\ldots,k_{l_{r}}\}$ be
the subset of $\{k_{1},\ldots,k_{p}\}$ whose elements lie in the
same connected component of $K_I$ as~$i$. Let $K_{\widehat I}$ be
the full subcomplex of $K_{I}$ on the vertex set
$I-\{k_{l_{1}},\ldots,k_{l_{r}}\}$. There is a projection
$K_{I}\to K_{\widehat I}$. Observe that the connected component of
${K}_{\widehat I}$ containing the vertex $i$ is precisely the
singleton $\{i\}$, and there is a one-to-one correspondence
between the remaining iterated commutators of the form
$[u_{k_1},[u_{k_2},\cdots[u_{k_{p}},[u_j,u_i]]\cdots]]$ in
%Taras
$H_{\ast}(\Omega\djs(K_{I}))$
%$H_{\ast}(\djs(K_{I}))$
and the iterated commutators of length $(p+2)-r$ in $H_{\ast}(
\Omega \djs(K_{\widehat I}))$ formed by deleting the elements
$u_{k_{l}}$ whenever $k_{l}\in\{k_{l_{i}},\ldots,k_{l_{r}}\}$. The
latter set is algebraically independent since, topologically,
$\djs(K_{\widehat I})$ is the wedge $\mathbb{C}P^{\infty}\vee
\djs(K_{\widehat I}-\{i\})$, and the iterated commutators
correspond to independent Whitehead products in $\varSigma
\Omega\mathbb{C}P^{\infty}\wedge \Omega D\simeq
     \varSigma S^{1}\wedge \Omega D$,
where $D=\djs(K_{\widehat I}-\{i\})$. Hence the former set is
algebraically independent, as required.
\end{proof}

We now come to identifying the class of flag complexes $K$ for
which $\zk$ has homotopy type of a wedge of spheres.

Let $\Gamma$ be a graph on the vertex set~$[m]$. A \emph{clique}
of $\Gamma$ is a subset $I$ of vertices such that every two
vertices in $I$ are connected by an edge. Obviously, each flag
complex $K$ is the \emph{clique complex} of its one-skeleton
$\Gamma=K^1$, that is, the simplicial complex formed by filling in
each clique of $\Gamma$ by a face.

A graph $\Gamma$ is called \emph{chordal} if each of its cycles
with $\ge 4$ vertices has a chord (an edge joining two vertices
that are not adjacent in the cycle). Equivalently, a chordal graph
is a graph with no induced cycles of length more than three.

The following result gives an alternative characterisation of
chordal graphs.

\begin{theorem}[\cite{fu-gr65}]
A graph is chordal if and only if its vertices can be ordered in
such a way that, for each vertex~$i$, the lesser neighbours
of~$i$ form a clique.
\end{theorem}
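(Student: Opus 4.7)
The plan is to prove the two directions separately. Call a vertex $v$ of $\Gamma$ \emph{simplicial} if $N(v)$ induces a clique, and call an ordering of the vertices a \emph{perfect elimination ordering} (PEO) if the lesser neighbours of every vertex form a clique.

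For the easy direction (a PEO forces chordality), consider any cycle $C\subset\Gamma$ of length $\ge 4$, and let $v$ be the vertex of $C$ with the largest label. Its two cycle-neighbours $u,w$ have smaller labels than $v$ and are therefore both lesser neighbours of $v$, hence adjacent. Since $|C|\ge 4$, the vertices $u$ and $w$ are not adjacent along $C$, so the edge $uw$ is a chord.

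For the converse, I would use induction on $m=|V(\Gamma)|$; the base case $m=1$ is trivial. The inductive step rests on Dirac's lemma: every chordal graph on $\ge 2$ vertices has a simplicial vertex. Granted the lemma, pick a simplicial vertex $v$ of $\Gamma$, label it $m$, and by induction order $\Gamma-v$ (which is chordal, since induced subgraphs of chordal graphs are chordal) to label its vertices $1,\ldots,m-1$ in a PEO. The resulting ordering is a PEO of $\Gamma$: the required property for labels $1,\ldots,m-1$ holds by induction, and for label $m$ the lesser neighbours are all of $N(v)$, which is a clique by the choice of $v$.

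The main obstacle is thus Dirac's lemma, which I would prove in the stronger form that every non-complete chordal graph has two non-adjacent simplicial vertices, by induction on $m$, with the base case $\overline{K_2}$ immediate. For the inductive step on a non-complete chordal graph $\Gamma$, fix non-adjacent vertices $a,b$ and a minimal $a$-$b$ separator $S$, and let $A,B$ denote the vertex sets of the components of $\Gamma-S$ containing $a$ and $b$ respectively. A preliminary fact I would establish is that $S$ is a clique: if $x,y\in S$ were non-adjacent, then by minimality of $S$ both $x$ and $y$ have neighbours in each of $A$ and $B$, so concatenating a shortest $x$-to-$y$ path with internal vertices in $A$ with a shortest $x$-to-$y$ path with internal vertices in $B$ yields a cycle of length $\ge 4$ whose potential chords are all ruled out (by minimality of each path, by the absence of edges between $A$ and $B$, and by the assumption $xy\notin E$), contradicting chordality. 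With $S$ known to be a clique, apply the inductive hypothesis to the chordal graph $\Gamma[A\cup S]$: if it is complete then $a$ is already simplicial in $\Gamma$ (since $N_\Gamma(a)\subseteq A\cup S$), and otherwise it has two non-adjacent simplicial vertices which cannot both lie in the clique $S$, so at least one lies in $A$ and is simplicial in $\Gamma$ because its $\Gamma$-neighbourhood is contained in $A\cup S$. A symmetric argument on $\Gamma[B\cup S]$ produces a simplicial vertex of $\Gamma$ in $B$, and these two simplicial vertices are non-adjacent since they lie in different components of $\Gamma-S$.
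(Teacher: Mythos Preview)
Your argument is correct and is essentially the classical Dirac--Fulkerson--Gross proof: the PEO-to-chordal direction via the maximal-label vertex of a cycle, and the chordal-to-PEO direction via the existence of a simplicial vertex, the latter established by the minimal-separator argument showing that minimal separators in chordal graphs are cliques. The details you give (minimality forcing each separator vertex to have neighbours in both components, the chordless cycle contradiction, and the inductive extraction of a simplicial vertex from each side) are all sound; you might add the one-line observation that a complete graph trivially has a simplicial vertex, so that the weak form of Dirac's lemma needed for the main induction follows from your strong form together with this trivial case.

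As for comparison with the paper: the paper does not prove this result at all. The theorem is stated with a citation to Fulkerson and Gross and is used as a black box in the proof of Theorem~\ref{flws} (the implication (c)$\Rightarrow$(d), where the perfect elimination ordering furnishes the ordering of maximal cliques required by Corollary~\ref{orderws}). So there is nothing to compare your approach against within the paper itself; you have supplied the standard proof that the authors chose to outsource to the literature.
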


Such an order of vertices is called a \emph{perfect elimination
ordering}.

\begin{theorem}\label{flws}
Let $K$ be a flag complex and $\k$ a field. The following
conditions are equivalent:
\begin{itemize}
\item[(a)] $\k[K]$ is a Golod ring;
\item[(b)] the multiplication in $H^*(\zk)$ is trivial;
\item[(c)] $\Gamma=K^1$ is a chordal graph;
\item[(d)] $\zk$ has homotopy type of a wedge of spheres.
\end{itemize}
\end{theorem}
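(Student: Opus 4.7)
The plan is to establish the cycle of implications (d) $\Rightarrow$ (b) $\Rightarrow$ (a) $\Rightarrow$ (c) $\Rightarrow$ (d). The implication (d) $\Rightarrow$ (b) is immediate: cup products in the reduced cohomology of a wedge of spheres vanish for dimensional reasons. The equivalence (a) $\Leftrightarrow$ (b) is essentially the content of the Berglund--J\"ollenbeck result \cite[Th.~5.1]{be-jo07} already recalled at the start of Section~\ref{sec:golod}: for face rings, triviality of the multiplication in $\Tor_{\k[v_{1},\ldots,v_{m}]}(\k[K],\k)$ automatically forces the higher Massey products to vanish, so (b) implies (a), while (a) implies (b) by definition.

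For (c) $\Rightarrow$ (d) I would exploit the classical \emph{running intersection property} of chordal graphs, a consequence of the existence of a perfect elimination ordering: for any chordal $\Gamma$ the maximal cliques can be listed $I_{1},\ldots,I_{s}$ so that for each $k\ge 2$ there exists $j<k$ with $I_{k}\cap(I_{1}\cup\cdots\cup I_{k-1})\subseteq I_{j}$. Since $K$ is flag, the maximal simplices of $K$ are exactly the maximal cliques of $\Gamma=K^{1}$, and the containment above forces $(I_{1}\cup\cdots\cup I_{k-1})\cap I_{k}$ to be a single face of $K$, being a subset of the simplex $I_{j}$. Corollary~\ref{orderws} then yields that $\zk$ has the homotopy type of a wedge of spheres.

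For (a) $\Rightarrow$ (c) I would argue by contrapositive. Suppose $\Gamma=K^{1}$ is not chordal; then it contains an induced cycle of length $n\ge 4$ on a vertex set $L\subseteq[m]$, and since $K$ is flag the full subcomplex $K_{L}$ coincides with this cycle $C_{n}$. A direct check with the polyhedral product definition shows that the coordinate projection $(D^{2})^{m}\to(D^{2})^{L}$ restricts to a retraction $\zk\to\mathcal Z_{K_{L}}$, with section given by inserting a fixed basepoint of $S^{1}$ in all coordinates outside $L$; consequently $H^{*}(\mathcal Z_{K_{L}})$ injects into $H^{*}(\zk)$ as a subring, and it suffices to exhibit a nonzero cup product in $H^{*}(\mathcal Z_{C_{n}})$. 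Because $C_{n}$ triangulates $S^{1}$, the space $\mathcal Z_{C_{n}}$ is a closed, simply connected, orientable manifold of dimension $n+2$. Applying Theorem~\ref{zkcoh} to a pair of non-adjacent vertices $I=\{i,j\}$ of $C_{n}$ produces a nonzero class in $H^{3}(\mathcal Z_{C_{n}})$, and since $3<n+2$, Poincar\'{e} duality forces a class in $H^{n-1}(\mathcal Z_{C_{n}})$ pairing nontrivially with it into $H^{n+2}(\mathcal Z_{C_{n}})$, yielding the required nontrivial cup product.

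The main obstacle is setting up the subcomplex descent cleanly: one must verify the polyhedral-product retraction $\zk\to\mathcal Z_{K_{L}}$ and check that it is compatible with the Hochster decomposition of Theorem~\ref{zkcoh}, so that the nontrivial cup product produced in $\mathcal Z_{C_{n}}$ genuinely pulls back to a nontrivial cup product in $H^{*}(\zk)$. The remaining pieces of the argument are essentially bookkeeping with the material already assembled in Sections~\ref{preli} and~\ref{sec:golod}.
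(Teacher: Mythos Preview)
Your proposal is correct and follows essentially the same route as the paper: the same cycle of implications (up to the trivial equivalence (a)$\Leftrightarrow$(b)), the same contrapositive argument for the chordal step via an induced cycle, and the same appeal to Corollary~\ref{orderws} for (c)$\Rightarrow$(d).

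Two minor differences are worth noting. First, for (c)$\Rightarrow$(d) you invoke the running intersection property of maximal cliques, whereas the paper works directly with a perfect elimination ordering, assigning to each vertex~$i$ the clique $I_i$ of $i$ together with its lesser neighbours; both arguments feed into Corollary~\ref{orderws} in the same way. Second, and more substantively, the step you flag as the ``main obstacle''---constructing a retraction $\zk\to\mathcal Z_{K_L}$ and then using Poincar\'e duality on $\mathcal Z_{C_n}$---is bypassed entirely in the paper. Since Theorem~\ref{zkcoh} already presents $H^*(\zk)$ as the Hochster ring $\bigoplus_{I\subset[m]}\widetilde H^*(K_I)$ with multiplication induced by the simplicial joins, any nontrivial product in $H^*(\mathcal Z_{K_L})$ appears verbatim as a nontrivial product in $H^*(\zk)$; no topological retraction or duality argument is needed. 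Your approach is valid, but the paper's is shorter given the tools already on the table.
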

\begin{proof}
(a)$\Rightarrow$(b) This is by definition of the Golod property
and Theorem~\ref{zkcoh}.

(b)$\Rightarrow$(c) Assume that $K^1$ is not chordal, and choose
an induced chordless cycle $I$ with $|I|\ge4$. Then the full
subcomplex $K_I$ is the same cycle (the boundary of an $|I|$-gon),
and therefore $\mathcal Z_{K_I}$ is a connected sum of sphere
products. Hence, $H^*(\mathcal Z_{K_I})$ has nontrivial products
(this can be also seen directly by using Theorem~\ref{zkcoh}).
Then, by Theorem~\ref{zkcoh}, the same nontrivial products appear
in~$H^*(\zk)$.

(c)$\Rightarrow$(d) Assume that the vertices of $K$ are in
%Taras
perfect
%total
elimination order. We assign to each vertex $i$ the clique $I_i$
consisting of $i$ and the lesser neighbours of~$i$. Each maximal
face of $K$ (that is, each maximal clique of~$K^1$) is obtained in
this way, so we get an induced order on the maximal faces:
$I_{i_1},\ldots,I_{i_s}$. Then, for each~$k=1,\ldots,s$, the
simplicial complex $\bigcup_{j<k}I_{i_j}$ is flag (since it is the
full subcomplex $K_{\{1,2,\ldots,i_{k-1}\}}$ in a flag complex).
The intersection $\bigr(\bigcup_{j<k}I_{i_j}\bigl)\cap I_{i_k}$ is
a clique, so it is a face of $\bigcup_{j<k}I_{i_j}$. Therefore,
$\zk$ has homotopy type of a wedge of spheres by
Corollary~\ref{orderws}.

(d)$\Rightarrow$(a) This is by definition of the Golod property
and the fact that the cohomology of the wedge of spheres contains
only trivial cup and Massey products.
\end{proof}

\begin{remark}
The equivalence of (a), (b) and (c) was proved
in~\cite[Th.~6.5]{be-jo07}.

All the implications in the above proof except (c)$\Rightarrow$(d)
are valid for arbitrary $K$, with the same arguments. However,
(c)$\Rightarrow$(d) fails in the non-flag case;
Example~\ref{rp2ex} is a counterexample.
\end{remark}

\begin{corollary}
Assume that $K$ is flag with $m$ vertices and $\zk$ has homotopy
type of a wedge of spheres. Then
%Taras
\begin{itemize}
\item[(a)]
the maximal dimension of spheres in the wedge is~$m+1$;
\item[(b)]
the number of spheres of dimension $\ell+1$ in the wedge is given
by $\sum_{|I|=\ell}\dim\widetilde H^0(K_I)$, for $2\le\ell\le m$;
\item[(c)] $H^i(K_I)=0$ for $i>0$ and all~$I$.
\end{itemize}
%Assume that $K$ is flag and $\zk$ has homotopy type of a wedge of
%spheres. Then the number of spheres of dimension $\ell+1$ in the
%wedge is given by $\sum_{|I|=\ell}\dim\widetilde H^0(K_I)$, for
%$2\le\ell\le m$. In particular, $H^i(K_I)=0$ for $i>0$ and
%all~$I$.
\end{corollary}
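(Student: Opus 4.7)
My plan is to combine Theorem~\ref{multgen} with the Bott--Samelson theorem and compare two counts of multiplicative generators of $H_{*}(\Omega\zk;\k)$.

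Write $\zk\simeq\bigvee_{\alpha} S^{n_{\alpha}}$. The Bott--Samelson theorem then identifies $H_{*}(\Omega\zk;\k)$ with the free associative algebra on generators $x_{\alpha}$ of degree $n_{\alpha}-1$, one for each sphere in the wedge. Thus the number of multiplicative generators of $H_{*}(\Omega\zk;\k)$ in homological degree $d$ equals the number of spheres of dimension $d+1$ in the wedge, and the total number of spheres in the wedge equals $\dim \widetilde H^{*}(\zk;\k)$.

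For part (b), Theorem~\ref{multgen} says that the iterated commutators indexed by a subset $I\subset[m]$ with $|I|=\ell$ all have homological degree $\ell$, and that there are exactly $\dim\widetilde H^{0}(K_{I})$ of them. Matching this with the Bott--Samelson count yields $\sum_{|I|=\ell}\dim \widetilde H^{0}(K_{I})$ spheres of dimension $\ell+1$, with $\ell$ running from~$2$ (since an iterated commutator has length at least two) up to~$m$. Part (a) is then immediate: the constraint $|I|\le m$ forces every sphere dimension to be at most $m+1$.

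For part (c), I compare totals. Theorem~\ref{multgen} gives $\sum_{I}\dim\widetilde H^{0}(K_{I})$ multiplicative generators of $H_{*}(\Omega\zk)$, which by Bott--Samelson equals $\dim \widetilde H^{*}(\zk;\k)$; Theorem~\ref{zkcoh} rewrites this dimension as $\sum_{I}\dim\widetilde H^{*}(K_{I})$. The forced equality
\[
\sum_{I\subset[m]}\dim\widetilde H^{*}(K_{I}) \;=\; \sum_{I\subset[m]}\dim\widetilde H^{0}(K_{I})
\]
implies $\widetilde H^{i}(K_{I})=0$ for all $i>0$ and all $I\subset[m]$. I do not foresee a serious obstacle here; the only delicate point is to track the degree shift between iterated commutators and their corresponding spheres correctly.
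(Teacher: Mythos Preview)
Your proposal is correct and follows essentially the same route as the paper's own proof: both use that $H_*(\Omega\zk)$ is free on the generators of Theorem~\ref{multgen} (your invocation of Bott--Samelson makes explicit what the paper leaves implicit), read off (a) and (b) from the degree count of those commutators, and deduce (c) by comparing the resulting description $\widetilde H^*(\zk)\cong\bigoplus_I\widetilde H^0(K_I)$ with the Hochster decomposition of Theorem~\ref{zkcoh}.
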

\begin{proof}
%Taras
If $\zk$ is a wedge of spheres, then $H_*(\Omega\zk)$ is a free
algebra on generators described by Theorem~\ref{multgen}, which
implies (a) and (b). It also follows that
$H^*(\zk)\cong\bigoplus_{I\subset[m]}\widetilde H^0(K_I)$. On the
other hand, $H^*(\zk)\cong\bigoplus_{I\subset[m]}\widetilde
H^*(K_I)$ by Theorem~\ref{zkcoh}, whence (c) follows.
%The first statement follows from Theorem~\ref{multgen}. The second
%one follows from Theorem~\ref{zkcoh}.
\end{proof}

\begin{theorem}\label{flcs}
Assume that $K$ is flag and $\k$ a field. The following
conditions are equivalent:
\begin{itemize}
\item[(a)] $K$ is minimally non-Golod;
\item[(b)] $\zk$ is homeomorphic to a connected sum of sphere products.
\end{itemize}
\end{theorem}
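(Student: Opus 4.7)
The plan is to prove the equivalence by routing both directions through Theorem~\ref{flws}, which characterises Golodness of a flag complex via chordality of its $1$-skeleton.

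For (a)$\Rightarrow$(b), applying Theorem~\ref{flws} to $K$ and to each (still flag) $K_{\widehat i}$ recasts minimal non-Golodness as: $\Gamma = K^1$ is not chordal, while $\Gamma\setminus\{i\}$ is chordal for every vertex~$i$. A short graph-theoretic argument then forces any chordless induced cycle $C$ of $\Gamma$ to be Hamiltonian, since otherwise deleting a vertex $v$ outside $C$ leaves $C$ intact inside $\Gamma\setminus\{v\}$, contradicting chordality. Any further edge of $\Gamma$ would have to be a chord of this chordless Hamiltonian cycle, which is impossible; hence $\Gamma = C$. Flagness and $|C|\ge 4$ force $K$ to be the boundary of an $m$-gon, and McGavran's theorem cited in Section~\ref{sec:golod} yields that $\zk$ is homeomorphic to a connected sum of products of two spheres.

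For (b)$\Rightarrow$(a), the non-Golodness of $K$ is immediate: in $M = \#_k(S^{p_k}\times S^{q_k}) \cong \zk$ the cup products $\alpha_k\smile\beta_k$ are nontrivial, so by Theorem~\ref{zkcoh} and the Berglund--J\"ollenbeck criterion $K$ is not Golod. To show each $K_{\widehat i}$ is Golod, I would exploit the coordinate-projection retraction $\zk = (D^2,S^1)^K \to (D^2,S^1)^{K_{\widehat i}} = \mathcal{Z}_{K_{\widehat i}}$ obtained by collapsing the $i$-th $(D^2,S^1)$ factor to the basepoint. On cohomology this realises $H^{\ast}(\mathcal{Z}_{K_{\widehat i}})$ as the Hochster subring $\bigoplus_{I\subset [m]\setminus\{i\}}\widetilde H^{\ast}(K_I)$ inside $H^{\ast}(\zk)$. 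Because $\zk\cong M$ is a closed manifold, $K$ is a flag $(n-1)$-sphere, and the unique top class $\gamma$ of $\zk$ lives in the Hochster summand for $I=[m]$, which is excluded from this subring. Since every nontrivial cup product in $H^{\ast}(M)$ is a scalar multiple of $\gamma$, any product of two elements of $H^{\ast}(\mathcal{Z}_{K_{\widehat i}})$ must vanish; Berglund--J\"ollenbeck then gives $K_{\widehat i}$ Golod, so $K$ is minimally non-Golod.

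The main obstacle is the cohomological step in (b)$\Rightarrow$(a): I would need to verify that the polyhedral-product retraction induces, at the level of the Hochster decomposition, exactly the subring inclusion described above, and confirm by a dimension count that the top class of $\zk$ is concentrated in the $I=[m]$ summand, using that $\widetilde H^{n-1}(K_I)$ vanishes unless $I=[m]$ when $K$ is an $(n-1)$-sphere, while the degree shift $H^p(\zk)\supset \widetilde H^{p-|I|-1}(K_I)$ rules out contributions of lower-dimensional $K_I$ to the top degree $m+n$.
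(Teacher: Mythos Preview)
Your treatment of (a)$\Rightarrow$(b) is exactly the paper's approach: the paper's entire proof is the single sentence ``if $K$ is flag and minimally non-Golod, then it is the boundary of an $m$-gon with $m\ge4$'', and your graph-theoretic argument (chordless cycle forced to be Hamiltonian, no chords, hence $\Gamma=C_m$) simply unpacks that claim, after which both you and the paper invoke McGavran.

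For (b)$\Rightarrow$(a) the paper gives no explicit argument, so your cohomological route through the Hochster decomposition is genuinely additional content rather than a different proof of the same step. Your argument is correct, and the obstacle you flag is not serious. You do not need the assertion that $K$ is a sphere; what you actually use is only that the manifold dimension of $\zk$ equals $m+n$. This follows directly: an interior point of a cell $(D^2)^\sigma\times(S^1)^{[m]\setminus\sigma}$ for a maximal face $\sigma$ has a Euclidean neighbourhood of dimension $m+|\sigma|$, so invariance of domain forces every maximal face to have size $n$ and $d=m+n$. Your dimension count then goes through verbatim: $\widetilde H^{\,m+n-|I|-1}(K_I)\ne 0$ requires $m+n-|I|-1\le\dim K_I\le n-1$, hence $I=[m]$, so the fundamental class lies in the $I=[m]$ summand and is invisible in $H^*(\mathcal Z_{K_{\widehat i}})$. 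The retraction $\zk\to\mathcal Z_{K_{\widehat i}}$ and its identification on Hochster summands are standard (the section picks a basepoint in the $i$th $S^1$ factor), so no further justification is needed there.
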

\begin{proof}
Indeed, if $K$ is flag and minimally non-Golod, then it is the
boundary of an $m$-gon with $m\ge4$.
\end{proof}

\section{The homotopy type of $ \Omega\zk$ when $K$ is flag}
\label{sec:loopzk}

In general, the homotopy type of $\zk$ when $K$ is a flag complex
may not be easy to determine. We have shown that $\zk$ has the
homotopy type of a wedge of spheres if $K$ is Golod, and $\zk$ has
the homotopy type of a connected sum of sphere products if $K$ is
minimally non-Golod. Beyond these two classes, it is not clear
what the homotopy type of $\zk$ may be. However, we will show in
Theorem~\ref{Whflag} that the homotopy type of $ \Omega\zk$
localised away from $2$ is a product of spheres and loops on
spheres.

To begin, suppose that $K$ is a flag complex on $m$ vertices.
Let $\overline K$ be the disjoint union of the $m$ vertices. Then
the inclusion
\[i\colon\namedright{\overline K}{}{K}\]
induces an inclusion
\[
  \djs(i)\colon\namedright{\djs(\overline K)=
  \bigvee_{j=1}^{m}\mathbb{C}P^{\infty}}{}{\djs(K)}
\]
and we obtain a homotopy pullback diagram
\begin{equation}
  \label{Kpb}
  \diagram
        \barzk\rto^-{\overline{f}}\dto^{\z(i)} & \djs(\overline K)\rto\dto^{D\!J(i)}
              & \prod_{i=1}^{m}\mathbb{C}P^{\infty}\ddouble \\
        \zk\rto^-{f} & \djs(K)\rto & \prod_{i=1}^{m}\mathbb{C}P^{\infty}
  \enddiagram
\end{equation}
which defines the maps $\z(i), \overline{f}$ and $f$.

It is useful to have some initial algebraic information.

\begin{lemma}
   \label{torsionfree}
   Let
   \(f\colon\namedright{X}{}{Y}\)
   be a map between two simply-connected spaces. If
   $H_{\ast}( \Omega X;\mathbb{Z})$ is torsion-free
   and $( \Omega f)_{\ast}$ is onto for coefficients in
   any field, then $H_{\ast}( \Omega Y;\mathbb{Z})$ is also torsion-free.
\end{lemma}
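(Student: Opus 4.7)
The plan is to establish the absence of $p$-primary torsion in $H_{\ast}(\Omega Y;\mathbb{Z})$ separately for each prime $p$, by running the mod-$p$ Bockstein spectral sequence in parallel for $\Omega X$ and $\Omega Y$ and transporting collapsing from the first to the second via naturality. Under mild finite-type hypotheses (satisfied by the loop spaces of interest later in the paper), for a simply-connected space $Z$ this spectral sequence has $E_1(Z)=H_{\ast}(Z;\mathbb{F}_p)$ with $d_1$ the mod-$p$ Bockstein, converges to the mod-$p$ reduction of the torsion-free quotient of $H_{\ast}(Z;\mathbb{Z})$, and has the feature that $H_{\ast}(Z;\mathbb{Z})$ is $p$-torsion free if and only if every $d_r$ vanishes, equivalently $E_{\infty}(Z)=E_1(Z)$.

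Because $H_{\ast}(\Omega X;\mathbb{Z})$ is torsion-free by assumption, the Bockstein spectral sequence for $\Omega X$ collapses at $E_1$: all $d_r$ vanish on $E_r(\Omega X)$, and $E_r(\Omega X)=E_1(\Omega X)=H_{\ast}(\Omega X;\mathbb{F}_p)$ for every $r$. By naturality $\Omega f$ induces a morphism of Bockstein spectral sequences $E_r(\Omega X)\to E_r(\Omega Y)$ whose $E_1$-page is the map $(\Omega f)_{\ast}\colon H_{\ast}(\Omega X;\mathbb{F}_p)\to H_{\ast}(\Omega Y;\mathbb{F}_p)$, surjective by hypothesis.

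I would then induct on $r\geq 1$ to establish simultaneously that $d_r=0$ on $E_r(\Omega Y)$ and that the induced map $E_{r+1}(\Omega X)\to E_{r+1}(\Omega Y)$ remains surjective. Assuming surjectivity on the $E_r$-page: for any $y\in E_r(\Omega Y)$ choose a preimage $x\in E_r(\Omega X)$; naturality gives $d_r(y)=(\Omega f)_{\ast}d_r(x)=0$, using $d_r=0$ on $E_r(\Omega X)$. Vanishing of $d_r$ on both sides then yields $E_{r+1}(\Omega X)=E_r(\Omega X)$ and $E_{r+1}(\Omega Y)=E_r(\Omega Y)$, with the same surjection induced on $E_{r+1}$-pages, which closes the induction. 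Hence $E_{\infty}(\Omega Y)=E_1(\Omega Y)$, forcing $H_{\ast}(\Omega Y;\mathbb{Z})$ to have no $p$-torsion. Running this for every prime $p$ gives the conclusion.

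The main obstacle is conceptual rather than technical: the key insight is to recognise that the integral torsion question can be translated, prime by prime, into a collapsing statement for the Bockstein spectral sequence. Once this is done, the naturality-plus-induction step is essentially formal and requires no computation with specific differentials or classes.
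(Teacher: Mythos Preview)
Your argument is correct and is essentially the paper's own proof, recast in spectral-sequence language: the paper argues by contradiction that a nonzero higher Bockstein $\beta^{r}$ on $\Omega Y$ would lift, by surjectivity and naturality, to a nonzero $\beta^{r}$ on $\Omega X$, contradicting torsion-freeness. Your inductive formulation on the pages of the Bockstein spectral sequence is a slightly more systematic packaging of exactly this mechanism.
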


\begin{proof}
Suppose $H_{\ast}( \Omega Y;\mathbb{Z})$ is not torsion-free. Then
there is a prime $p$ and elements $b,\bar b\in H_{\ast}( \Omega
Y;\mathbb{Z}/p\mathbb{Z})$ such that $\beta^{r}\bar b=b$, where
$\beta^{r}$ is the $r^{th}$-Bockstein. As $( \Omega f)_{\ast}$ is
onto in mod-$p$ homology, there are elements
%Taras
$a,\bar a\in H_{\ast}( \Omega X;\mathbb{Z}/p\mathbb{Z})$
%$a,\bar a\in H_{\ast}( \Omega Y;\mathbb{Z}/p\mathbb{Z})$
such that
$( \Omega f)_{\ast}(a)=b$ and $( \Omega f)_{\ast}(\bar a)=\bar b$.
As $\beta^{r}$ commutes with $( \Omega f)_{\ast}$, we obtain
\[( \Omega f)_{\ast}(\beta^{r}\bar a)=\beta^{r} ( \Omega f)_{\ast}(\bar a)=
      \beta^{r}\bar b=b,\]
implying that $\beta^{r}\bar a\neq 0$.
This contradicts the fact that $H_{\ast}( \Omega X;\mathbb{Z})$
is torsion-free.
\end{proof}

\begin{corollary}
   \label{zktorsionfree}
   Let $K$ be a flag complex. Then $H_{\ast}( \Omega\zk;\mathbb{Z})$
   is torsion-free.
\end{corollary}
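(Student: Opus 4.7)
The plan is to apply Lemma~\ref{torsionfree} to the map $\z(i)\colon\barzk\to\zk$ appearing in the pullback diagram~\eqref{Kpb}. Two things must be verified: that $H_*(\Omega\barzk;\mathbb{Z})$ is torsion-free, and that $(\Omega\z(i))_*$ is surjective in homology with coefficients in any field.

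For the torsion-freeness, since $\overline K$ is the disjoint union of $m$ vertices, Theorem~\ref{disjointpoints} identifies $\barzk$ with a wedge of simply-connected spheres. The classical James/Bott--Samelson theorem then presents $H_*(\Omega\barzk;\mathbb{Z})$ as the tensor algebra on a free $\mathbb{Z}$-module, which is torsion-free.

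For the surjectivity, fix a field $\k$. Both $\overline K$ and $K$ are flag, so Theorem~\ref{flagldjs} applies to each. Under these identifications, the induced map $\djs(i)_*\colon H_*(\Omega\djs(\overline K);\k)\to H_*(\Omega\djs(K);\k)$ is the algebra quotient sending $u_j\mapsto u_j$, and is therefore surjective. A short diagram chase using the naturality of the exact sequence~\eqref{commutator} for both $\overline K$ and $K$---noting that the map induced on the abelianised quotient $\Lambda[u_1,\ldots,u_m]$ is the identity---shows that the restriction of $\djs(i)_*$ to commutator subalgebras, which under the splitting after looping is exactly $(\Omega\z(i))_*$, remains surjective. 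Concretely, given $b\in H_*(\Omega\zk;\k)$, any preimage $\tilde b\in H_*(\Omega\djs(\overline K);\k)$ automatically lies in $H_*(\Omega\barzk;\k)$, because its image in $\Lambda[u_1,\ldots,u_m]$ equals that of $b$, which vanishes.

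With these two inputs in hand, Lemma~\ref{torsionfree} applied with $X=\barzk$, $Y=\zk$ and $f=\z(i)$ immediately yields that $H_*(\Omega\zk;\mathbb{Z})$ is torsion-free. No substantial obstacle arises: the crux is the observation that $\djs(i)_*$ is the obvious quotient of graph-product-like algebras with matching generators, so surjectivity descends to the commutator subalgebras via the identification of the abelianisations under the identity map.
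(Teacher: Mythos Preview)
Your proof is correct and follows essentially the same approach as the paper's. The only cosmetic difference is the order of two steps: the paper applies Lemma~\ref{torsionfree} to the map $\djs(i)$ (using that $H_*(\Omega\djs(\overline K);\Z)$ is torsion-free and $(\Omega\djs(i))_*$ is onto) and then reads off the torsion-freeness of $H_*(\Omega\zk;\Z)$ from the splitting $\Omega\djs(K)\simeq T^m\times\Omega\zk$, whereas you first pass through the splitting to deduce surjectivity of $(\Omega\z(i))_*$ from that of $(\Omega\djs(i))_*$, and then apply Lemma~\ref{torsionfree} directly to $\z(i)$.
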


\begin{proof}
Observe that $ \Omega\djs(\overline K) \simeq T^{m}\times
\Omega\barzk$ and by Theorem~\ref{disjointpoints}, $\barzk$ is
homotopy equivalent to a wedge of spheres. Thus $H_{\ast}(
\Omega\djs(\overline K))$ is torsion-free. By
Theorem~\ref{flagldjs},
%and Lemma~\ref{CNzk},
%Taras
$(\Omega\djs(i))_{\ast}$
%$(\Omega DJ(i))_{\ast}$
is onto for coefficients in any field. So
by Lemma~\ref{torsionfree}, $H_{\ast}( \Omega\zk;\mathbb{Z})$ is
torsion-free.
\end{proof}

We now show that $ \Omega\zk$ for $K$ flag is homotopy
equivalent to a product of spheres and loops on spheres, when
localised rationally or at any prime $p\neq 2$.

\begin{theorem}
   \label{Whflag}
   Let $K$ be a flag complex. The following hold when localised rationally
   or at any prime $p\neq 2$:
   \begin{itemize}
      \item[(a)] the map
               \(\llnamedright{ \Omega\djs(\overline K)}{ \Omega D\!J(i)}{}{ \Omega\djs(K)}\)
               has a right homotopy inverse;
      \item[(b)] the map
               \(\lnamedright{ \Omega\barzk}{ \Omega\z(i)}{ \Omega\zk}\)
               has a right homotopy inverse;
     \item[(c)] $ \Omega\djs(K)$ and $ \Omega\zk$ are homotopy equivalent
               to products of spheres and loops on spheres.
   \end{itemize}
\end{theorem}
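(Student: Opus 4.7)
My plan is to show that parts~(a) and~(b) are formally equivalent, then derive~(c) from~(b), and concentrate the main effort on constructing the right homotopy inverse in~(b).

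For the equivalence of~(a) and~(b), I would loop the fibration $\zk\to\djs(K)\to BT^m$ to obtain a fibration $ \Omega\zk\to \Omega\djs(K)\to T^m$, which is split as spaces via the canonical inclusion $T^m\simeq \Omega BT^m\hookrightarrow \Omega\djs(K)$. This yields $ \Omega\djs(K)\simeq T^m\times \Omega\zk$, naturally in $K$, so composing a right homotopy inverse for $ \Omega\djs(i)$ with the projection onto the $ \Omega\zk$-factor produces one for $ \Omega\z(i)$, and extending a right inverse for $ \Omega\z(i)$ by the identity on $T^m$ recovers one for $ \Omega\djs(i)$.

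For~(b), I would use Theorems~\ref{disjointpoints}, \ref{multgen} and Lemma~\ref{CNzk} together. By Theorem~\ref{disjointpoints}, $\barzk$ is a wedge of spheres, so Hilton's theorem decomposes $ \Omega\barzk$ as a weak product $\prod_{\alpha} \Omega S^{n_{\alpha}}$ whose factors are indexed by iterated basic commutators; Lemma~\ref{CNzk} identifies each factor with a specific iterated commutator $[u_{k_1},[\ldots,[u_j,u_i]\cdots]]$ in the free algebra $H_*( \Omega\barzk)$. Theorem~\ref{multgen} singles out precisely the subset of those commutators which span the indecomposables of $H_*( \Omega\zk)$. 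For each such distinguished commutator I would realise the corresponding iterated Whitehead product $S^n\to\barzk$ whose loop adjoint hits the given Hilton factor, then compose with $\z(i)$ to obtain the matching Whitehead product into $\zk$. Multiplying these loop maps together using the $H$-space structure of $ \Omega\barzk$ gives a candidate right homotopy inverse $t\colon \Omega\zk\to \Omega\barzk$. The identity $ \Omega\z(i)\circ t\simeq\mathrm{id}$ should then be verified on multiplicative generators, using Corollary~\ref{zktorsionfree} to reduce the check to homology and Theorem~\ref{multgen} to control which commutators survive under $ \Omega\z(i)_*$.

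For~(c), the factor projections of the Hilton decomposition, composed with $t$, exhibit $ \Omega\zk$ as homotopy equivalent (rationally or $p$-locally for odd $p$) to the product of exactly those Hilton factors corresponding to the surviving generators. Applying Serre's splitting $ \Omega S^{2n}\simeq_{(p)}S^{2n-1}\times \Omega S^{4n-1}$ to the even-sphere factors converts this into a product of spheres and loops on spheres, and the Poincar\'e series identity of Proposition~\ref{poinser}, together with torsion-freeness from Corollary~\ref{zktorsionfree}, confirms the count of factors is right. The main obstacle lies in~(b): stitching the factor-by-factor lifts into a single, well-defined map requires the Jacobi identity for iterated Samelson products, which only holds after inverting $2$; the same restriction $p\neq 2$ then reappears in~(c) through Serre's splitting of $ \Omega S^{2n}$.
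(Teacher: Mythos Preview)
Your reduction (a)$\Leftrightarrow$(b) via the natural splitting $\Omega\djs(K)\simeq T^m\times\Omega\zk$ is correct and matches the paper. The gap is in your construction for~(b) and the ensuing claim in~(c). You propose to select from the Hilton decomposition of $\Omega\barzk$ exactly the factors corresponding to the commutators of Theorem~\ref{multgen}, and to show their product is $\Omega\zk$. But Theorem~\ref{multgen} provides a minimal set of \emph{associative-algebra} generators for $H_*(\Omega\zk)$, i.e.\ a basis of the indecomposables $L'/[L',L']$ where $H_*(\Omega\zk)\cong UL'$. A product decomposition $\Omega\zk\simeq\prod_\beta\Omega S^{n_\beta}$ requires instead a basis of $L'$ itself, so that PBW gives the coalgebra isomorphism $UL'\cong\bigotimes_\beta S(u_\beta)$. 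When $K$ is not Golod these are very different: $L'$ is infinite-dimensional while the list in Theorem~\ref{multgen} is finite. Concretely, for the pentagon, Theorem~\ref{multgen} yields ten generators and your product would be $(\Omega S^3)^5\times(\Omega S^4)^5$, with Poincar\'e series $(1-t^2)^{-5}(1-t^3)^{-5}=1+5t^2+5t^3+15t^4+\cdots$; but Proposition~\ref{poinser} gives $P\bigl(H_*(\Omega\zk);t\bigr)=(1-5t^2-5t^3+t^5)^{-1}=1+5t^2+5t^3+25t^4+\cdots$. So the very check you invoke from Proposition~\ref{poinser} fails, and the composite you build cannot induce a homology isomorphism. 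There is also a directional slip: multiplying loop maps into $\Omega\barzk$ produces a map out of a product of loop spheres, not out of $\Omega\zk$; a section $t\colon\Omega\zk\to\Omega\barzk$ only appears \emph{after} that composite is shown to be an equivalence, so (b) and (c) are obtained together rather than in the order you propose.

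The paper avoids naming the factors altogether. Using $p\ne2$ to rewrite $u_i^2=0$ as the Lie relation $[u_i,u_i]=0$, one gets $H_*(\Omega\djs(\overline K))\cong U\overline L$ and $H_*(\Omega\djs(K))\cong UL$ with a surjection of Lie algebras $\pi\colon\overline L\to L$; a $\k$-module splitting of~$\pi$ together with PBW gives $S(\overline L)\cong S(\widetilde L)\otimes S(L)$. The Hilton--Milnor equivalence for $\Omega\djs(\overline K)$ combined with Serre's odd-primary splitting $\Omega S^{2k}\simeq S^{2k-1}\times\Omega S^{4k-1}$ identifies $S(\overline L)$ with $\bigotimes_{\alpha'}S(u_{\alpha'})$, and then elementary linear algebra in each degree selects an (infinite) subset $\mathcal J$ for which $\bigotimes_{\beta\in\mathcal J}S(u_\beta)$ maps isomorphically onto $S(L)$. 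Realising this subset geometrically gives a map from a product of spheres and loop spheres through $\Omega\djs(\overline K)$ to $\Omega\djs(K)$ that is a homology isomorphism, proving (a) and (c) at once; (b) then follows from the equivalence (a)$\Leftrightarrow$(b) you already established.
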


\begin{remark}
Theorem~\ref{Whflag} may be true integrally.
Corollary~\ref{zktorsionfree} says there are no obstructions
arising from torsion homology classes. When $K$ is Golod, so $\zk$
is homotopy equivalent to a wedge of spheres, then the integral
statement is a consequence of the Hilton--Milnor Theorem. When $K$
is minimally non-Golod, so $\zk$ is homeomorphic to a connected
sum of sphere products, then the integral statement holds
by~\cite{be-th12}. The methods in~\cite{be-th12} arise in a
different context and may or may not adapt to the case of $\zk$
for a general flag complex; at present not enough information is
known about $\zk$. The methods presented below may possibly be
fine tuned to prove the integral case, but more delicate
information would have to be known about the commutators in
$H_{\ast}( \Omega\zk)$. In particular, Theorem~\ref{multgen} gives
a minimal multiplicative basis for $H_{\ast}( \Omega\zk)$, but we
do not know enough about potential relations among them.
\end{remark}

\begin{proof}
We begin with an integral argument to establish some equivalences
between statements in the theorem.
After looping~(\ref{Kpb}), we obtain a homotopy pullback diagram
\[\diagram
         \Omega\barzk\rto^-{ \Omega\overline{f}}\dto^{ \Omega\z(i)}
              &  \Omega\djs(\overline K)\rto\dto^{ \Omega D\!J(i)}
              & T^{m}\ddouble \\
         \Omega\zk\rto^-{ \Omega f} &  \Omega\djs(K)\rto & T^{m}.
  \enddiagram\]
Since the fibration along the top row splits, it induces a
splitting of the fibration along the bottom row. Therefore, using
the loop structures in $ \Omega\djs(\overline K)$ and
$ \Omega\djs(K)$ to multiply, we obtain a homotopy commutative
diagram of homotopy equivalences
\[\diagram
        T^{m}\times \Omega\barzk\rto^-{\simeq}\dto^{1\times \Omega\z(i)}
              &  \Omega\djs(\overline K)\dto^{ \Omega D\!J(i)} \\
        T^{m}\times \Omega\zk\rto^-{\simeq} &  \Omega\djs(K).
  \enddiagram\]
Thus $ \Omega\djs(i)$ has a right homotopy inverse if and only if
$ \Omega\z(i)$ has a right homotopy inverse. Further, as $
\Omega\djs(K)\simeq T^{m}\times \Omega\zk$, we see that $
\Omega\djs(K)$ is homotopy equivalent to a product of spheres and
loops on spheres if and only if $ \Omega\zk$ is.

Now localise rationally or at a prime $p\neq 2$. It remains to
show that $ \Omega\djs(i)$ has a right homotopy inverse and $
\Omega\djs(K)$ is homotopy equivalent to a product of spheres and
loops on spheres. By Theorem~\ref{flagldjs}, there are
isomorphisms
\begin{align*}
   H_*\bigl( \Omega\djs(\overline K);\bf k\bigr) & \cong
     T\langle u_1,\ldots,u_m\rangle\big/\bigl(u_i^2=0\bigr) \\
   H_*\bigl( \Omega\djs(K);\bf k\bigr) & \cong
    T\langle u_1,\ldots,u_m\rangle\big/
      \bigl(u_i^2=0,\; u_iu_j+u_ju_i=0\text{ for }\{i,j\}\in K\bigr)
\end{align*}
where $\bf k$ is $\mathbb{Q}$ if we are localised rationally or
$\mathbb{Z}/p\mathbb{Z}$ if localised at $p$.
The free tensor algebra $T\langle u_{1},\ldots,u_{m}\rangle$ is isomorphic
to $UL\langle u_{1},\ldots,u_{m}\rangle$, the universal enveloping algebra
of the free Lie algebra on $u_{1},\ldots,u_{m}$. The relations in the two
tensor algebras above are induced from relations imposed on the underlying
free Lie algebra $L\langle u_{1},\ldots,u_{m}\rangle$. For
as $2$ is inverted in $\bf k$, the relation $u_{i}^{2}=0$ is equivalent to the
relation $[u_{i},u_{i}]=0$, and as each $u_{i}$ is of degree $1$, we have
$u_{i}u_{j}+u_{j}u_{i}=[u_{i},u_{j}]$. Thus there are isomorphisms
\begin{align*}
   & T\langle u_1,\ldots,u_m\rangle\big/\bigl(u_i^2=0\bigr)\cong
        U\bigl(L\langle u_{1},\ldots,u_{m}\rangle\big/\bigl([u_{i},u_{i}]=0\bigr)\bigr) \\
   & T\langle u_1,\ldots,u_m\rangle\big/\bigl(u_i^2=0,\,
       u_iu_j+u_ju_i=0\text{ for }\{i,j\}\in K\bigr)\\
   & \hspace{3cm}\cong U\bigl(L\langle u_{1},\ldots,u_{m}\rangle\big/\bigl([u_{i},u_{i}]=0,\,
       [u_i,u_j]=0\text{ for }\{i,j\}\in K\bigr)\bigr).
\end{align*}
To simplify notation, let
\begin{align*}
   & \overline L= L\langle u_{1},\ldots,u_{m}\rangle\big/\bigl([u_{i},u_{i}]=0\bigr) \\
   & L=L\langle u_{1},\ldots,u_{m}\rangle\big/\bigl([u_{i},u_{i}]=0,\,
       [u_i,u_j]=0\text{ for }\{i,j\}\in K\bigr).
\end{align*}

Observe as well that in passing from loop space homology to
universal enveloping algebras of Lie algebras, the map $( \Omega
\djs(i))_{\ast}$ is modelled by $U(\pi)$, where
\[\pi\colon\namedright{\overline L}{}{L}\]
is the quotient map of Lie algebras. As a map of $\bf k$-modules,
$\pi$ has a right inverse. Thus if $\widetilde{L}$ is the kernel of $\pi$,
then by~\cite{co-mo-ne79} there is an isomorphism of left $U\widetilde L$-modules
\[U\overline L\cong U\widetilde L\otimes UL.\]
Taking associated graded modules if necessary, by the
Poincar\'{e}--Birkhoff--Witt Theorem we obtain an isomorphism of
$\bf k$-modules
\[S(\overline L)\cong S(\widetilde L)\otimes S(L)\]
where $S(\,\,)$ is the free symmetric algebra functor.

In this case the Poincar\'{e}--Birkhoff--Witt Theorem has a
geometric realisation. Since $\overline K$ is a disjoint union of
points, by Theorem~\ref{disjointpoints}, there is an integral
homotopy equivalence $\barzk\simeq\bigvee_{\ell=2}^{m}
(S^{\ell+1})^{\vee (\ell-1)\binom{m}{\ell}}$. Therefore there are
integral homotopy equivalences
\[
   \Omega\djs(\overline K)\simeq T^{m}\times
   \Omega\barzk\simeq T^{m}\times  \Omega\Bigl
  (\bigvee_{\ell=2}^{m} (S^{\ell+1})^{\vee (\ell-1)
  \binom{m}{\ell}}\Bigr).
\]
The Hilton--Milnor Theorem gives an explicit decomposition of the
loops on a wedge of spheres as an infinite product of looped
spheres. In our case, we obtain an integral homotopy equivalence
\begin{equation}
  \label{HMequiv}
   \Omega\djs(\overline K)\simeq T^{m}\times
      \prod_{\alpha\in\mathcal{I}} \Omega S_{\alpha}
\end{equation}
for some index set $\mathcal{I}$, where each $S_{\alpha}$ is a sphere.

Take homology in~(\ref{HMequiv}) with $\bf k$ coefficients. We
have $H_{\ast}(T^{m})\cong\Lambda[u_{1},\ldots,u_{m}]$, where each
$u_{i}$ is of degree one. That is,
$H_{\ast}(T^{m})\cong\bigotimes_{i=1}^{m} S(u_{i})$. Next, if the
dimension of $S_{\alpha}$ is odd, say $S_{\alpha}=S^{2k+1}$, then
$H_{\ast}( \Omega S_{\alpha})\cong\k[u_{\alpha}]$, where $\vert
u_{\alpha}\vert=2k$, so $H_{\ast}( \Omega S_{\alpha})\cong
S(u_{\alpha})$. If the dimension of $S_{\alpha}$ is even, say
$S_{\alpha}=S^{2k}$ then the $\bf k$-local splitting $ \Omega
S^{2k}\simeq S^{2k-1}\times \Omega S^{4k-1}$ implies that
$H_{\ast}( \Omega
S_{\alpha})\cong\Lambda[u_{\alpha}]\otimes\k[v_{\alpha}]$, where
$\vert u_{\alpha}\vert=2k-1$ and $\vert v_{\alpha}\vert=4k-1$, so
$H_{\ast}( \Omega S_{\alpha})\cong S(u_{\alpha})\otimes
S(v_{\alpha})$. Putting all this together, (\ref{HMequiv})~implies
that there is a coalgebra isomorphism
\[
  H_{\ast}\bigl( \Omega\djs(\overline K);\k\bigr)\cong
  \bigotimes_{\alpha'\in\mathcal{I'}} S(u_{\alpha'})
\]
where the index set $\mathcal{I'}$ consists of $\{1,2,\ldots,m\}$,
every $\alpha\in\mathcal{I}$ where $S_{\alpha}$ is of odd
dimension, and two indices $\alpha_{2k-1},\alpha_{4k-1}$ for every
$\alpha\in\mathcal{I}$ where $S_{\alpha}$ is of dimension~$2k$.

We now have two descriptions of
$H_{\ast}( \Omega\djs(\overline K))$ as symmetric algebras, so
there is an isomorphism
\[S(\overline L)\cong\bigotimes_{\alpha'\in\mathcal{I'}} S(u_{\alpha'}).\]
On the other hand, there is a decomposition
$S(\overline L)\cong S(\widetilde L)\otimes S(L)$, so we can
choose a new index set $\mathcal{J}\subseteq\mathcal{I'}$ such that
the composite
\begin{equation}
  \label{hlgycomp}
  \bigotimes_{\beta\in\mathcal{J}} S(u_{\beta})\hookrightarrow
      \nameddright{\bigotimes_{\alpha'\in\mathcal{I'}} S(u_{\alpha'})}{\cong}
      {S(\overline L)}{\mathrm{proj}}{S(L)}
\end{equation}
is an isomorphism. Write
$\mathcal{J}=\mathcal{J}_{1}\sqcup\mathcal{J}_{2}$ where
$\mathcal{J}_{1}$ (respectively $\mathcal{J}_{2}$) consists of all
those $\beta\in\mathcal{J}$ with $\vert u_{\beta}\vert$ odd
(respectively even). Observe that~(\ref{hlgycomp}) is induced in
homology by the composite
\[
  \Bigl(\prod_{\beta\in\mathcal{J}_{1}} S_{\beta}\Bigr)\times
  \Bigl(\prod_{\beta\in\mathcal{J}_{2}} \Omega S_{\beta}\Bigr)
  \hookrightarrow
  \llnameddright{T^{m}\times\prod_{\alpha\in\mathcal{I}}
   \Omega S_{\alpha}}{\simeq}
  { \Omega\djs(\overline K)}{ \Omega DJ(i)}
  { \Omega\djs(K)}.
\]
The left map exists $\k$-locally, since there is a $\bf k$-local
decomposition $ \Omega S^{2k}\simeq S^{2k-1}\times \Omega
S^{4k-1}$. Thus if we take $\phi$ to be the composite of the left
and middle maps above, then $\phi$ has property that $
\Omega\djs(i)\circ\phi$ induces an isomorphism in $\k$-homology.
This completes the proof.
\end{proof}

\section{An example: The boundary of a pentagon}
\label{pentex}

In this section we consider an example which illustrates many of the
ideas and results of the paper. This is most fully discussed once
the algebra generators
%Taras
of $H_{\ast}( \Omega\djs(K))$
%of$H_{\ast}( \Omega\djs(K))$
are geometrically realised by Samelson products, so we begin with
a general lemma.

Let $K$ be a flag complex which is Golod. As in
Section~\ref{sec:loopzk}, let $\overline K$ be the disjoint union
of the $m$ vertices in $K$. We obtain an inclusion
\(i\colon\namedright{\overline K}{}{K}\) which induces an
inclusion \(\djs(i)\colon\namedright{\djs(\overline
K)=\bigvee_{j=1}^{m}\mathbb{C}P^{\infty}}
      {}{\djs(K)}\).
For $1\le i\le m$, let~$\overline\mu_{i}$ be the composite
\[
  \overline\mu_{i}\colon\namedddright{S^{2}}{}{\mathbb{C}P^{\infty}}{}
  {\bigvee_{j=1}^{m}\mathbb{C}P^{\infty}}{D\!J(i)}{\djs(K)}
\]
where the left map is the inclusion of the bottom cell and the middle map
is the inclusion of the $i^{th}$-wedge summand. Let
\[\mu_{i}\colon\namedright{S^{1}}{}{ \Omega\djs(K)}\]
be the adjoint of $\overline\mu_{i}$. Then in the description of
$H_{\ast}( \Omega\djs(K))$ in~(\ref{flagalg}), the Hurewicz
image of $\mu_{i}$ is the algebra generator $u_{i}$.

Since the Samelson product commutes with the Hurewicz
homomorphism, the Hurewicz image of any iterated Samelson product
of the $\mu_{i}$'s is the corresponding iterated commutator of the
$u_{i}$'s. As well, in the homotopy fibration
\(\nameddright{ \Omega\zk}{}{ \Omega\djs(K)}{}
% changed to T^m, as this notation was used in Section 2-4
%{\prod_{i=1}^{m} S^{1}}
{T^m}\), since $\pi_k(T^m)=0$ for $k>1$, any iterated Samelson
product of the $\mu_{i}$'s composes trivially into $T^m$ and so
lifts to $ \Omega\zk$.

Since we are regarding $H_{\ast}( \Omega\zk)$ as the commutator
subalgebra of $H_{\ast}( \Omega\djs(K))$ via exact
sequence~(\ref{commutator}), we can regard the lift to $\Omega\zk$
of any iterated Samelson product of the $\mu_{i}$'s as having
the same Hurewicz image. Therefore, the algebra generators
\[
  [u_j,u_i],\quad [u_{k_1},[u_j,u_i]],\quad\ldots,\quad
  [u_{k_1},[u_{k_2},\cdots[u_{k_{m-2}},[u_j,u_i]]\cdots]]
\]
of $H_{\ast}( \Omega\zk)$ in Theorem~\ref{multgen}, with
restrictions on the indices as stated in the theorem, are the
Hurewicz images of the lifts to $ \Omega\zk$ of the iterated
Samelson products
\begin{equation}
  \label{htpygen}
  [\mu_j,\mu_i],\quad [\mu_{k_1},[\mu_j,\mu_i]],\quad\ldots,\quad
  [\mu_{k_1},[\mu_{k_2},\cdots[\mu_{k_{m-2}},[\mu_j,\mu_i]]\cdots]].
\end{equation}

\begin{lemma}
   \label{Whgolod}
   Let $K$ be a flag complex and $\k$ a field. Suppose that $K$ is
   Golod, or equivalently by Theorem~\ref{flws}, that $\zk$ is
   homotopy equivalent to a wedge of spheres. Then each sphere in this
   wedge maps to $\djs(K)$ by an iterated Whitehead product of the
   maps $\overline\mu_{1},\ldots\overline\mu_{m}$.
\end{lemma}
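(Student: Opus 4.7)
The plan is to construct an explicit map
\[\Phi : \bigvee_{\alpha} S^{n_{\alpha}} \to \zk\]
whose restriction to each wedge summand is the lift through $f : \zk \to \djs(K)$ of an iterated Whitehead product of the $\overline\mu_{1},\ldots,\overline\mu_{m}$, and then to verify that $\Phi$ is a homotopy equivalence. Composing such a $\Phi$ with $f$ will exhibit each wedge summand of $\zk$ as mapping into $\djs(K)$ by an iterated Whitehead product, proving the lemma. The candidates for $\Phi$ on each summand will come directly from the iterated Samelson products in~\eqref{htpygen}.

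First I would index the wedge summands by the iterated commutators of Theorem~\ref{multgen}, observing that a commutator of length $\ell$ corresponds to a sphere of dimension $\ell+1$ and that the corollary preceding this lemma makes these two enumerations agree in every dimension. For each commutator $[u_{k_1},[u_{k_2},\cdots [u_{k_{p}},[u_j,u_i]]\cdots ]]$, form the corresponding iterated Samelson product of the $\mu_i$'s in $\Omega\djs(K)$. Under the projection $\Omega\djs(K) \to T^m$ from the loop fibration, each $\mu_i$ maps to a coordinate circle, and because $T^{m}$ is homotopy commutative, every iterated Samelson product of length at least two into $T^m$ is null-homotopic. Hence each such Samelson product lifts through $\Omega\zk \to \Omega\djs(K)$, and taking adjoints yields a lift $\psi_{\alpha} : S^{n_\alpha} \to \zk$ through $f$ of the iterated Whitehead product $[\overline\mu_{k_1},[\overline\mu_{k_2},\cdots [\overline\mu_{k_{p}},[\overline\mu_j,\overline\mu_i]]\cdots ]]$. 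Assembling the $\psi_\alpha$'s defines $\Phi$.

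Second, I would show that $\Phi$ is a homotopy equivalence by looping. By the Bott--Samelson theorem, $H_\ast(\Omega \bigvee_\alpha S^{n_\alpha}; \k)$ is the free associative algebra on generators indexed by the wedge summands; by Theorem~\ref{flws} combined with Theorem~\ref{golodcond}(a), $H_\ast(\Omega\zk; \k)$ is also free associative. By construction and the discussion immediately preceding the lemma, $(\Omega\Phi)_\ast$ sends each Bott--Samelson generator to the Hurewicz image of the corresponding iterated commutator, and by Theorem~\ref{multgen} these form a minimal generating set of $H_\ast(\Omega\zk)$; a Poincar\'e series count (already implicit in the corollary before the lemma) upgrades surjectivity to isomorphism. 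Combined with integral torsion-freeness of both source (Hilton--Milnor) and target (Corollary~\ref{zktorsionfree}), $\Omega\Phi$ is an integral homology equivalence, so by Whitehead's theorem $\Phi$ is a homotopy equivalence. The main obstacle I anticipate is the first step: choosing the lifts $\psi_\alpha$ so that the Hurewicz images of $\Omega\psi_\alpha$ hit precisely the distinguished generators of Theorem~\ref{multgen} rather than arbitrary elements in the same algebra degree; once this matching is established, the algebraic freeness argument of the second step proceeds formally.
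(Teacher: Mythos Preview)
Your proposal is correct and follows essentially the same line as the paper's proof. The paper argues more tersely: since $\zk$ is a wedge of spheres, $H_*(\Omega\zk)$ is free on one generator per sphere; Theorem~\ref{multgen} identifies a minimal generating set as the specified iterated commutators; these are Hurewicz images of the lifted Samelson products from~\eqref{htpygen}; adjointing gives the Whitehead products. Your version makes explicit what the paper leaves implicit, namely the construction of a specific equivalence $\Phi\colon\bigvee_\alpha S^{n_\alpha}\to\zk$ built from the lifted Whitehead products and the verification (via $(\Omega\Phi)_*$ and the torsion-freeness of Corollary~\ref{zktorsionfree}) that $\Phi$ is a homotopy equivalence; the paper simply asserts the correspondence between spheres and commutators without spelling out this step. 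Your anticipated obstacle about matching Hurewicz images to the distinguished generators is already handled by the discussion immediately preceding the lemma together with the injectivity of $H_*(\Omega\zk)\to H_*(\Omega\djs(K))$ from~\eqref{commutator}, so no further work is needed there.
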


\begin{proof}
Since $\zk$ is homotopy equivalent to a wedge of spheres,
$H_{\ast}( \Omega\zk)$ is a free associative algebra, where each
algebra generator of degree $d$ corresponds to a sphere of
dimension $d+1$ in the wedge decomposition of $\zk$. On the other
hand, a minimal generating set for $H_{\ast}( \Omega\zk)$ is given
by the iterated commutators in Theorem~\ref{multgen}, so each
iterated commutator listed in Theorem~\ref{multgen} of degree $d$
corresponds to a sphere of dimension $d+1$ in the wedge
decomposition of $\zk$. Applying the map \(\namedright{
\Omega\zk}{}{ \Omega\djs(K)}\), these iterated commutators are the
Hurewicz images of the iterated Samelson products
in~(\ref{htpygen}). Therefore, adjointing, the spheres in the
wedge decomposition of $\zk$ map to $\djs(K)$ by the iterated
Whitehead products
\[
  [\overline\mu_j,\overline\mu_i],\quad [\overline\mu_{k_1},
      [\overline\mu_j,\overline\mu_i]],\quad\ldots,\quad
  [\overline\mu_{k_1},[\overline\mu_{k_2},\cdots
  [\overline\mu_{k_{m-2}},[\overline\mu_j,\overline\mu_i]]\cdots]]
\]
with restrictions on the indices as in Theroem~\ref{multgen}.
\end{proof}

Now let $K$ be the boundary of pentagon, shown in Fig.~\ref{fbpent}.
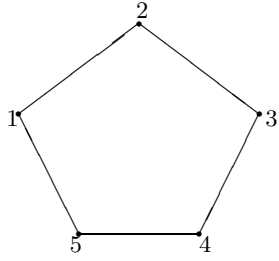
\begin{figure}[h]
\unitlength=0.8mm
  \begin{center}
  \begin{picture}(40,37)
  \put(8.5,-3){\small 5}
  \put(-2,18){\small 1}
  \put(19.5,36){\small 2}
  \put(41,18){\small 3}
  \put(30,-3){\small 4}
  \put(0,20){\circle*{1}}
  \put(20,35){\circle*{1}}
  \put(40,20){\circle*{1}}
  \put(30,0){\circle*{1}}
  \put(10,0){\circle*{1}}
  \put(10,0){\line(1,0){20}}
  \put(10,0){\line(-1,2){10}}
  \put(0,20){\line(4,3){20}}
  \put(20,35){\line(4,-3){20}}
  \put(40,20){\line(-1,-2){10}}
  \end{picture}
  \end{center}
  \caption{Boundary of pentagon.}
  \label{fbpent}
\end{figure}
Theorem~\ref{multgen} gives the
following 10 generators for the Pontryagin algebra
$H_*(\Omega\zk)$:
\begin{gather*}
  a_1=[u_3,u_1],\quad a_2=[u_4,u_1],\quad a_3=[u_4,u_2],\quad
  a_4=[u_5,u_2],\quad a_5=[u_5,u_3],\\
  b_1=[u_4,[u_5,u_2]],\quad b_2=[u_3,[u_5,u_2]],\quad
  b_3=[u_1 ,[u_5,u_3]],\\
  b_4=[u_3,[u_4,u_1]],\quad b_5=[u_2,[u_4,u_1]],
\end{gather*}
where $\deg a_i=2$ and $\deg b_i=3$. By Lemma~\ref{Whgolod},
$a_1$ is the Hurewicz image of the Samelson
product $[\mu_3,\mu_1]\colon S^2\to\Omega\djs(K)$ lifted to
$\Omega\zk$, and $b_1$ is the Hurewicz image of the iterated
Samelson product $[\mu_4,[\mu_5,\mu_2]]\colon S^3\to\Omega\djs(K)$
lifted to $\Omega\zk$; the other $a_i$ and $b_i$ are described
similarly. We therefore have adjoint maps
\[
  \epsilon\colon(S^2\vee
  S^3)^{\vee5}\to\Omega\zk\quad\text{and}\quad
  j\colon(S^3\vee S^4)^{\vee5}\to\zk
\]
corresponding to the wedge of all $a_i$ and~$b_i$. Now a
calculation using relations from Theorem~\ref{flagldjs} and the
Jacobi identity shows that $a_i$ and $b_i$ satisfy the relation
\begin{equation}\label{onerel}
  \sum_{i=1}^5 [a_i,b_i]=0
\end{equation}
(the signs can be made right by changing the order the elements
in the commutators defining $a_{i}$, $b_{i}$ if necessary).
This relation has a topological meaning. In general, suppose
that $M$ and $N$ are $d$-dimensional manifolds. Let
$\overline{M}$ be the $(d-1)$-skeleton of $M$, or equivalently,
$\overline{M}$ is obtained from $M$ by removing a disc in the
interior of the $d$-cell of $M$. Define $\overline{N}$ similarly.
Suppose that
\(f\colon\namedright{S^{d-1}}{}{\overline{M}}\)
and
\(g\colon\namedright{S^{d-1}}{}{\overline{N}}\)
are the attaching maps for the top cells in $M$ and $N$.
Then the attaching map for the top cell in the connected sum $M\cs N$ is
\(\namedright{S^{d-1}}{f+g}{\overline{M}\vee\overline{N}}\).
In our case, $S^{3}\times S^{4}$ is a manifold and the attaching map
\(\namedright{S^{6}}{}{S^{3}\vee S^{4}}\)
for its top cell is the Whitehead product $[s_{1},s_{2}]$, where $s_{1}$
and $s_{2}$ respectively are the inclusions of $S^{3}$ and $S^{4}$ into
$S^{3}\vee S^{4}$. The attaching map for the top cell of the $5$-fold connected
sum $(S^{3}\times S^{4})^{\cs 5}$ is therefore the sum of five
%Taras
such Whitehead products. Composing it with $j$ into~$\zk$ and
passing to the adjoint map we obtain
$\sum_{i=1}^{5}[a_{i},b_{i}]$.
%such Whitehead products, and composing it with $j$ into~$\zk$ we
%obtain $\sum_{i=1}^{5} [a_{i},b_{i}]$.
By~(\ref{onerel}), this sum is null homotopic. Thus the inclusion
$j\colon(S^3\vee S^4)^{\vee5}\to\zk$ extends to a map
\[
  \widetilde j\colon(S^3\times S^4)^{\cs5}\to\zk.
\]
Furthermore, a calculation using Theorem~\ref{zkcoh} shows that
$\widetilde j$ induces an isomorphism in cohomology
(see~\cite[Ex.~7.22]{bu-pa02}), that is, $\widetilde j$ is a
homotopy equivalence. Since both $(S^3\times S^4)^{\cs5}$ and
$\zk$ are manifolds, the complement of $(S^3\vee S^4)^{\vee5}$ in
$(S^3\times S^4)^{\cs5}$ and $\zk$ is a 7-disc, so that the
extension map $\widetilde j$ can be chosen to be one-to-one, which
implies that $\widetilde j$ is a homeomorphism.

We also obtain that $H_*(\Omega\zk)$ is the quotient of a free
algebra on ten generators $a_i,b_i$ by
relation~\eqref{onerel}. Its Poicar\'e series is given by
Proposition~\ref{poinser}:
\[
  P\bigl( H_*( \Omega\zk);t \bigr)\;=\;
  \frac1{1-5t^2-5t^3+t^5}\;.
\]
The summand $t^5$ in the denominator is what differs the
Poincar\'e series of the one-relator algebra $H_*(\Omega\zk)$ from
that of the free algebra $H_*(\Omega(S^3\vee S^4)^{\vee5})$.

A similar argument can be used to show that $\zk$ is homeomorphic
to a connected sum of sphere products when $K$ is a boundary of a
$m$-gon with $m\ge4$.

\end{document}